\newtheorem{theorem}{Theorem}[section]
\newtheorem{lemma}[theorem]{Lemma}
\theoremstyle{definition}
\newtheorem{remark}[theorem]{Remark}
\begin{document}

\title{Approximation to an extremal number, its square and its cube}

\author{Johannes Schleischitz} 

\address{Institute of Mathematics, Boku Vienna, Austria  \\ 
johannes.schleischitz@boku.ac.at}

\begin{abstract}
We study rational approximation properties for successive powers of extremal numbers
defined by Roy. For $n\in{\{1,2\}}$, the classic approximation constants 
$\lambda_{n}(\zeta),\widehat{\lambda}_{n}(\zeta),w_{n}(\zeta),\widehat{w}_{n}(\zeta)$ 
connected to an extremal number $\zeta$ have been established and in fact much more is known. 
However, so far almost nothing had been known for $n\geq 3$.
In this paper we determine all classic approximation constants as above for $n=3$. Our methods
will more generally provide detailed information on the combined graph 
defined by Schmidt and Summerer assigned to an extremal
number, its square and its cube.
We provide some results for $n=4$ as well. In the course of the proofs
of the main results we establish a very general connection between Khintchine's transference 
inequalities and uniform approximation. 
\end{abstract}

\maketitle

{\footnotesize{Supported by the Austrian Science Fund FWF grant P24828.} \\

{\em Keywords}: extremal numbers, Diophantine approximation constants, geometry of numbers, lattices \\
Math Subject Classification 2010: 11H06, 11J13}

\vspace{4mm}

\section{Approximation constants and extremal numbers} \label{sek1}

Let $\zeta$ be a real transcendental number and $n\geq 1$ be an integer. For $1\leq j\leq n+1$
we define the approximation constants $\lambda_{n,j}(\zeta)$ as
the supremum of $\eta\in{\mathbb{R}}$ such that the system 
\begin{equation}  \label{eq:lambda}
\vert x\vert \leq X, \qquad \max_{1\leq i\leq n} \vert \zeta^{i}x-y_{i}\vert \leq X^{-\eta},  
\end{equation}
has (at least) $j$ linearly independent solutions $(x,y_{1},y_{2},\ldots, y_{n})\in{\mathbb{Z}^{n+1}}$ 
for arbitrarily large values of $X$. Moreover, let $\widehat{\lambda}_{n,j}(\zeta)$
be the supremum of $\eta$
such that \eqref{eq:lambda} has (at least) $j$ linearly independent solutions for 
all sufficiently large $X$. 
In case of $j=1$ we also only write $\lambda_{n}(\zeta)$ and $\widehat{\lambda}_{n}(\zeta)$ respectively,
which are just the classical approximation constants defined by Bugeaud and Laurent~\cite{buglau}.
By Dirichlet's Theorem for all transcendental real $\zeta$ and $n\geq 1$ these exponents satisfy the estimate
\begin{equation} \label{eq:ldiri}
\lambda_{n}(\zeta)\geq \widehat{\lambda}_{n}(\zeta)\geq \frac{1}{n}.
\end{equation}
Moreover from the definition we see that
\[
\lambda_{1}(\zeta)\geq \lambda_{2}(\zeta)\geq \cdots, 
\qquad \widehat{\lambda}_{1}(\zeta)\geq \widehat{\lambda}_{2}(\zeta)\geq \cdots.
\]

Similarly, let $w_{n,j}(\zeta)$ and $\widehat{w}_{n,j}(\zeta)$ be the supremum of  
$\eta\in{\mathbb{R}}$ such that the system 
\begin{equation}  \label{eq:w}
H(P) \leq X, \qquad  0<\vert P(\zeta)\vert \leq X^{-\eta},  
\end{equation}
has (at least) $j$ linearly independent polynomial solutions 
$P(T)=a_{n}T^{n}+a_{n-1}T^{n-1}+\cdots+a_{0}$ of degree at most $n$
with integers $a_{j}$ for arbitrarily large $X$ and all large $X$ respectively, where 
$H(P)=\max_{0\leq j\leq n} \vert a_{j}\vert$. Again for $j=1$ we also write $w_{n}(\zeta)$ and $\widehat{w}_{n}(\zeta)$
which coincide with classical exponents. Again by Dirichlet's Theorem we have
\begin{equation} \label{eq:wmono}
w_{n}(\zeta)\geq \widehat{w}_{n}(\zeta)\geq n.
\end{equation}
Moreover it is obvious that
\[
w_{1}(\zeta)\leq w_{2}(\zeta)\leq \cdots, 
\qquad \widehat{w}_{1}(\zeta)\leq \widehat{w}_{2}(\zeta)\leq \cdots. 
\]
The exponents defined above are connected via Khintchine's transference inequalities~\cite{khintchine} 
\begin{equation} \label{eq:khintchine}
\frac{w_{n}(\zeta)}{(n-1)w_{n}(\zeta)+n}\leq \lambda_{n}(\zeta)\leq \frac{w_{n}(\zeta)-n+1}{n}.
\end{equation}
Similarly thanks to German~\cite{german} we know that the uniform exponents are connected via
\begin{equation} \label{eq:ogerman}
\frac{\widehat{w}_{n}(\zeta)-1}{(n-1)\widehat{w}_{n}(\zeta)}\leq
 \widehat{\lambda}_{n}(\zeta)\leq \frac{\widehat{w}_{n}(\zeta)-n+1}{\widehat{w}_{n}(\zeta)}. 
\end{equation}
We point out that the estimates \eqref{eq:khintchine} and \eqref{eq:ogerman}
hold more generally for the analogue exponents concerning
vectors $\underline{\zeta}\in\mathbb{R}^{n}$ whose coordinates
are $\mathbb{Q}$-linearly independent together with $\{1\}$,
see for example~\cite{ss}. This will be of some importance in Remark~\ref{kuh}. 
Moreover in this case all estimates in \eqref{eq:khintchine} and \eqref{eq:ogerman} are known to be optimal.

It is known due to Davenport and Schmidt~\cite{davsh}
that $\widehat{w}_{2}(\zeta)\leq (3+\sqrt{5})/2$ for all real transcendental $\zeta$.
Roy~\cite{royyy} proved that there exist countably many real transcendental numbers for which equality holds,
and called such numbers extremal numbers. Their approximation properties have been intensely studied
in dimensions $n\in\{1,2\}$. We gather below some of the known facts which will be of importance for this paper. Throughout the paper let
\[
\rho=2+\sqrt{5}, \qquad \tau=\frac{3+\sqrt{5}}{2}, \qquad \nu=\frac{1+\sqrt{5}}{2}, \qquad \gamma=\frac{\sqrt{5}-1}{2}.
\]
These values are linked via $\tau=\nu^{2}, \rho=\nu^{3}$ and $\gamma=\nu^{-1}$. 
Moreover $\tau=\nu+1$ and $\nu^{2}-\nu-1=0$.
It is known that for $\zeta$ an extremal number the identities
\begin{equation} \label{eq:werte}
w_{1}(\zeta)=\lambda_{1}(\zeta)=\lambda_{2}(\zeta)=1, \quad \widehat{\lambda}_{2}(\zeta)=\gamma,
\quad w_{2}(\zeta)=\rho, \quad \widehat{w}_{2}(\zeta)=\tau
\end{equation}
hold. Concerning the higher successive minima functions
it is immediate by Roy's results that any extremal number satisfies
\begin{align} 
w_{2,2}(\zeta)&=\tau, \qquad w_{2,3}(\zeta)=\nu, \qquad \lambda_{2,2}(\zeta)=\gamma, 
\qquad \lambda_{2,3}(\zeta)=\gamma^{2},
 \label{eq:rhotau} \\
\widehat{w}_{2,2}(\zeta)&=\nu, \qquad
\widehat{w}_{2,3}(\zeta)=1, \qquad \widehat{\lambda}_{2,2}(\zeta)=\gamma^{2}, 
\qquad \widehat{\lambda}_{2,3}(\zeta)=\gamma^{3}. \label{eq:taurho}
\end{align}

In fact even more detailed approximation properties are known for $n=2$. 
There is concise information on the integral approximation vectors 
inducing very good approximations in \eqref{eq:lambda} 
such as for the polynomials inducing very good approximations in \eqref{eq:w}. We will concretely utilize
the following consequence of Roy's results
which is part of the claim of~\cite[Theorem~7.2]{roy}. 
See also~\cite[Proposition~8.1, Theorem~8.2]{royyy}.
As usual $a\asymp b$ means
both $a\ll b$ and $b\ll a$ are satisfied everywhere it occurs in the sequel. 

\begin{theorem}[Roy] \label{roythm}
For any extremal number $\zeta$ there exists a sequence of irreducible
polynomials $(P_{k})_{k\geq 1}\in\mathbb{Z}[T]$ of degree precisely two such that
\[
H(P_{k+1})\asymp H(P_{k})^{\nu}, \qquad  \vert P_{k}(\zeta)\vert \asymp H(P_{k})^{-\rho}.
\]
Moreover we have 
\begin{equation}  \label{eq:strich}
\vert P_{k}^{\prime}(\zeta)\vert \asymp H(P_{k}).
\end{equation}
All the implied constants depend on $\zeta$ only.
\end{theorem}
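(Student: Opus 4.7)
The plan is to extract a sequence of irreducible quadratic best-approximation polynomials from the extremality hypothesis and then pin down their heights and derivatives via a resultant argument.

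First I would enumerate those $P\in\mathbb{Z}[T]$ of degree at most $2$ that realize successive minima of $|P(\zeta)|$ as the height grows, calling them $(P_k)_{k\geq 1}$, so that $H(P_{k+1}) > H(P_k)$ and $|P_{k+1}(\zeta)| < |P_k(\zeta)|$. From $\widehat{w}_{2}(\zeta)=\tau$ applied at heights just below $H(P_{k+1})$ one obtains $|P_k(\zeta)|\leq H(P_{k+1})^{-\tau+o(1)}$, while $w_{2}(\zeta)=\rho$ from \eqref{eq:werte} forces $|P_k(\zeta)|\geq H(P_k)^{-\rho-o(1)}$ for all but finitely many $k$. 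Combining the two, and using $\rho/\tau = \nu$, one obtains the one-sided estimate $H(P_{k+1}) \ll H(P_k)^{\nu+o(1)}$.

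The matching lower bound $H(P_{k+1}) \gg H(P_k)^{\nu-o(1)}$ is the crux and is obtained through Roy's resultant lemma. The Sylvester resultant $R_k := \mathrm{Res}(P_k,P_{k+1})\in\mathbb{Z}\setminus\{0\}$ admits the factorization $R_k = a_k^2 P_{k+1}(\alpha_k)P_{k+1}(\alpha_k')$, where $\alpha_k,\alpha_k'$ are the roots of $P_k$; for the root of $P_k$ nearest to $\zeta$ the Taylor expansion gives $P_{k+1}(\alpha_k) = P_{k+1}(\zeta) + O(|\zeta-\alpha_k|H(P_{k+1}))$, and analogously at $\alpha_k'$. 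Plugging $|R_k|\geq 1$ together with the value estimates of the previous paragraph into this factorization produces an inequality tying $H(P_k)$ to $H(P_{k+1})$; iterating to the triple $(P_{k-1},P_k,P_{k+1})$ closes it into a Fibonacci-type recursion whose characteristic root is precisely $\nu$, by $\nu^2 = \nu+1$. Sharpening both directions to asymptotic equalities then yields $H(P_{k+1}) \asymp H(P_k)^\nu$ and $|P_k(\zeta)|\asymp H(P_k)^{-\rho}$.

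Irreducibility and $\deg P_k = 2$ follow from $w_1(\zeta)=1$ in \eqref{eq:werte}: if $P_k$ factored as a product $\ell m$ of two integer linear polynomials for infinitely many $k$, then $|\ell(\zeta)m(\zeta)|\asymp H(P_k)^{-\rho}$ together with $H(\ell),H(m)\ll H(P_k)$ would yield a sequence of rational linear approximations certifying $w_1(\zeta) > 1$, a contradiction. For \eqref{eq:strich}, the upper bound $|P_k'(\zeta)|\ll H(P_k)$ is trivial; for the lower bound I would use the identity $P_k'(\zeta)^2 = \mathrm{disc}(P_k) + 4 a_k P_k(\zeta)$. The resultant argument, applied now to pairs $(P_{k-1},P_k)$ and $(P_k,P_{k+1})$, forces the two roots of $P_k$ themselves to be separated by a uniform positive constant, whence $|\mathrm{disc}(P_k)|\asymp H(P_k)^2$; the correction term $|4 a_k P_k(\zeta)| \ll H(P_k)^{1-\rho}$ is negligible, giving $|P_k'(\zeta)|\asymp H(P_k)$. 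The main obstacle is paragraph two: carefully tracking which root of each $P_k$ is closest to $\zeta$ and ruling out accidental cancellations in the Mahler-type product is the delicate combinatorial core of Roy's original argument in \cite{royyy}.
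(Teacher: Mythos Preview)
The paper does not give its own proof of this statement: it is quoted from Roy, with pointers to \cite[Theorem~7.2]{roy} and \cite[Proposition~8.1, Theorem~8.2]{royyy}, plus the one-line remark that irreducibility follows from $\lambda_{1}(\zeta)=1$ together with \eqref{eq:multipli}. So there is no in-paper argument to compare against beyond that remark, which your irreducibility step reproduces exactly.

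As a reconstruction of Roy's argument your outline has the right ingredients --- best-approximation polynomials of degree at most two, nonvanishing resultants $\mathrm{Res}(P_{k},P_{k+1})$ yielding a Fibonacci-type recursion with characteristic root $\nu$, and the discriminant identity $P_{k}^{\prime}(\zeta)^{2}=\mathrm{disc}(P_{k})+4a_{k}P_{k}(\zeta)$ for \eqref{eq:strich}. But there is a genuine circularity you should be aware of. You feed in $w_{2}(\zeta)=\rho$ and $w_{1}(\zeta)=1$ from \eqref{eq:werte} as hypotheses; in Roy's actual development the \emph{only} input is the defining condition $\widehat{w}_{2}(\zeta)=\tau$, and the identities \eqref{eq:werte} are consequences of the structure of the sequence $(P_{k})$ that you are trying to establish. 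In particular the upper bound $w_{2}(\zeta)\leq\rho$ is essentially the assertion $\vert P_{k}(\zeta)\vert\gg H(P_{k})^{-\rho}$, which is half of what you want to prove, so invoking it at the outset begs the question. Within the present paper this is harmless, since both \eqref{eq:werte} and Theorem~\ref{roythm} are imported wholesale from Roy; but as a free-standing proof your first paragraph would need to be rewritten to use only $\widehat{w}_{2}(\zeta)=\tau$, with the resultant recursion doing the work of bounding $\vert P_{k}(\zeta)\vert$ from both sides simultaneously. Likewise your claim that ``the resultant argument forces the two roots of $P_{k}$ to be separated by a uniform positive constant'' is not automatic from what you have written; Roy obtains \eqref{eq:strich} through an explicit recursive identity among the $P_{k}$ rather than via root separation.
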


For the irreducibility and \eqref{eq:strich} see~\cite[Proposition~8.1, Theorem~8.2]{royyy},
the other claims are part of the claims of~\cite[Theorem~7.2]{roy}.
In fact the irreducibility is easily deduced from $\lambda_{1}(\zeta)=1$ in \eqref{eq:werte} 
and \eqref{eq:multipli} below. Indeed these relations imply that $P_{k}$ in the theorem 
cannot have a rational root at least for large $k$ and are thus indeed irreducible. 
In context of \eqref{eq:rhotau}, \eqref{eq:taurho} we finally mention that for
$n=2$ extremal numbers induce the regular graph defined by Schmidt and Summerer, we 
refer to~\cite{sums}.

This paper aims to provide a better understanding of the classic approximation constants
for extremal numbers in higher dimension $n>2$. More generally we will provide a description of 
the behavior of the approximation functions $L_{j}(q)$ and $L_{j}^{\ast}(q)$ defined by Schmidt and Summerer~\cite{ss}
in the course of their study of parametric geometry of numbers, for $n=3$ and
partially for $n=4$. We recall basic facts on parametric
geometry of numbers in Section~\ref{para}. 
Our results will arise as a combination of the known results on extremal numbers for $n\in\{1,2\}$
recalled above with estimates from parametric geometry of numbers. 
So far only few non-trivial quantitative results 
on classical approximation constants for extremal numbers in dimension $n>2$ exist.
The estimates 
\[
w_{n}(\zeta)\leq \exp\{c(\zeta)\cdot (\log (3n))^{2}(\log\log(3n))^{2}\}
\]
for all $n\geq 1$ and some constant $c(\zeta)>0$ are due to Adamczewski and Bugeaud~\cite{adabu}.
It was recently proved~\cite{buschl} that $\widehat{w}_{3}(\zeta)\leq 4$ for extremal    
numbers $\zeta$, which improves the upper bound $3+\sqrt{2}$ valid for 
all transcendental real $\zeta$     
from the same paper (which in turn improved the bound $2n-1=5$ of
Davenport and Schmidt~\cite[Theorem~2b]{davsh}).    
However, we will determine the precise value of $\widehat{w}_{3}(\zeta)$ in Theorem~\ref{ndrei}. 
Besides approximation to extremal numbers by cubic algebraic integers has been investigated.
Roy~\cite{royyy} showed that for extremal number $\zeta$ and 
any algebraic integer $\alpha$ of degree three we have
\[
\vert\zeta-\alpha\vert \gg H(\alpha)^{-\tau-1}.
\]
Moreover in~\cite[Theorem~1.1]{roydam} he showed that for some extremal 
numbers the exponent $-1-\tau$ can be replaced by $-\tau$. 
The exponent $-\tau$ is optimal since
\[
\vert\zeta-\alpha\vert \ll H(\alpha)^{-\tau}
\]
has solutions in algebraic integers $\alpha$ of degree at most three and arbitrarily
large height $H(\alpha)$ for any given real number $\zeta$, as shown by Davenport and Schmidt~\cite{davsh}.
It follows that for any real $\zeta$ there are monic polynomials of degree at most three
and arbitrarily large height $H(P)$ such that
\[
\vert P(\zeta)\vert \ll H(P)^{-\nu}.
\]
It follows from~\cite{royyy} that the exponent $\-\nu$ is optimal as well, since again 
the reverse inequality holds at least for some class of extremal numbers and arbitrarily large $H(P)$.

\section{New results}

\subsection{The case $n=3$} \label{sek31}

The first major result of the paper is the following.

\begin{theorem} \label{ndrei}
Let $\zeta$ be an extremal number.
Then we have
\begin{equation} \label{eq:zweite}
w_{3}(\zeta)=w_{2}(\zeta)=\rho, \qquad \lambda_{3}(\zeta)=\frac{1}{\sqrt{5}},
\end{equation}
and
\begin{equation} \label{eq:erste}
\widehat{w}_{3}(\zeta)=3, \qquad \widehat{\lambda}_{3}(\zeta)=\frac{1}{3}.
\end{equation}

\end{theorem}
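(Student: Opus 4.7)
Proof plan.

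The lower bounds are immediate: monotonicity via \eqref{eq:werte} gives $w_{3}(\zeta)\geq w_{2}(\zeta)=\rho$, while Dirichlet's theorem \eqref{eq:wmono}, \eqref{eq:ldiri} gives $\widehat{w}_{3}(\zeta)\geq 3$ and $\widehat{\lambda}_{3}(\zeta)\geq 1/3$. The remaining lower bound $\lambda_{3}(\zeta)\geq 1/\sqrt{5}$ will be obtained at the end, from the explicit description of the combined graph.

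I plan to establish the matching upper bounds by analyzing the four Schmidt--Summerer successive minima functions $L_{1}^{\ast}\leq\cdots\leq L_{4}^{\ast}$ attached to the vector $(1,\zeta,\zeta^{2},\zeta^{3})$, subject to the Minkowski constraint $L_{1}^{\ast}+\cdots+L_{4}^{\ast}=O(1)$. The crucial input is Theorem~\ref{roythm}: for each Roy polynomial $P_{k}$, both $P_{k}(T)$ and $T\cdot P_{k}(T)$ are linearly independent integer polynomials of degree at most three with heights $\asymp H(P_{k})$ and values $\asymp H(P_{k})^{-\rho}$ at $\zeta$. They pin down $L_{1}^{\ast}(q_{k})$ and $L_{2}^{\ast}(q_{k})$, which must coincide up to $O(1)$ and attain the deep value prescribed by the $n=2$ theory. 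Since consecutive critical parameters are spaced by $q_{k+1}-q_{k}=\gamma q_{k}+O(1)$ from the asymptotic $H(P_{k+1})\asymp H(P_{k})^{\nu}$, and the piecewise linear functions $L_{j}^{\ast}$ have only two admissible slopes, the combined graph is fully determined once we exclude any further deep minimum at these parameters.

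The main obstacle is precisely this exclusion: no integer polynomial $Q$ of degree at most three, linearly independent from $P_{k}$ and $TP_{k}$, should approximate $\zeta$ nearly as well. I plan to treat this via polynomial division $Q=A\cdot P_{k}+B$ with $A,B\in\mathbb{Z}[T]$, $\deg A\leq 1$ and $\deg B\leq 1$, so that
\[
|B(\zeta)|\leq |Q(\zeta)|+|A(\zeta)|\cdot |P_{k}(\zeta)|.
\]
Using $w_{1}(\zeta)=\lambda_{1}(\zeta)=1$ from \eqref{eq:werte}, which yields a Liouville-type lower bound $|B(\zeta)|\gg H(B)^{-1-\varepsilon}$ for every nonzero linear $B$ with $H(B)$ large, together with height bookkeeping, one forces $B\equiv 0$ (so $Q$ is divisible by $P_{k}$) and controls $|A(\zeta)|$ by the same linear estimate. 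This yields $w_{3}(\zeta)\leq \rho$, and the uniform version $\widehat{w}_{3}(\zeta)\leq 3$ follows from the analogous reasoning on the ascent intervals between consecutive $q_{k}$, where the two-slope dynamics forces $L_{1}^{\ast}(q)$ to return to a value comparable to the Dirichlet threshold at some point of each interval.

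Once $\widehat{w}_{3}(\zeta)=3$ is established, German's inequality \eqref{eq:ogerman} collapses to $\widehat{\lambda}_{3}(\zeta)\leq (3-2)/3=1/3$, matching the Dirichlet lower bound. Finally, the remaining equality $\lambda_{3}(\zeta)=1/\sqrt{5}$ is read off the primal combined graph: tracking $L_{1}(q)$ through its descent and subsequent ascent between $q_{k}$ and $q_{k+1}$, the optimum of $-L_{1}(q)/q$ works out to $1/(\nu+\gamma)=1/\sqrt{5}$ via the identity $\nu^{2}=\nu+1$. The simultaneous tightness of both halves of \eqref{eq:ogerman} at $\widehat{w}_{3}=3$, $\widehat{\lambda}_{3}=1/3$ is precisely the \emph{very general connection between Khintchine's transference inequalities and uniform approximation} advertised in the abstract.
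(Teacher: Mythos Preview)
Your overall framework --- pin down $L_{1}^{\ast},L_{2}^{\ast}$ at the critical parameters via $P_{k},TP_{k}$, then determine the rest of the combined graph and read off the exponents --- is the paper's framework, and your terminal computations (German's collapse at $\widehat{w}_{3}=3$, the identity $1/(\nu+\gamma)=1/\sqrt{5}$) are correct. But the step you flag as the main obstacle is where the proposal breaks.

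The division $Q=A\cdot P_{k}+B$ with $A,B\in\mathbb{Z}[T]$ is not available: Theorem~\ref{roythm} gives no control on the leading coefficient of $P_{k}$, so Euclidean division only yields $A,B\in\mathbb{Q}[T]$. More seriously, even in the monic case the height bookkeeping does not close. Writing out the division explicitly one finds $H(A)\ll H(Q)H(P_{k})$ and $H(B)\ll H(Q)H(P_{k})^{2}$, so that the dominant term in your upper bound is
\[
|A(\zeta)|\cdot|P_{k}(\zeta)|\ \ll\ H(Q)\,H(P_{k})^{\,1-\rho},
\]
which for $H(Q)\asymp H(P_{k})$ is only of order $H(P_{k})^{2-\rho}\approx H(P_{k})^{-2.24}$, whereas the Liouville input $w_{1}(\zeta)=1$ gives merely $|B(\zeta)|\gg H(B)^{-1-\varepsilon}\gg H(P_{k})^{-3-O(\varepsilon)}$. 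No contradiction results, so one cannot force $B\equiv 0$; varying which $P_{j}$ one divides by does not help either, as the two constraints on $H(P_{j})$ needed for a contradiction are incompatible. Thus the exclusion step, and with it your route to $w_{3}(\zeta)\le\rho$ and to the uniform bound, does not go through as written.

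The paper avoids this exclusion problem entirely by arguing from the \emph{opposite} side: instead of showing that no third polynomial is as good as $P_{k},TP_{k}$, it \emph{constructs} explicit third and fourth polynomials, namely the products $P_{k}E_{l}$ and $P_{k}E_{l+1}$ with consecutive best linear approximations $E_{l}$. Because $w_{1}(\zeta)=1$ forces $|E_{l}(\zeta)|\approx H(E_{l})^{-1}$, Lemma~\ref{technisch} shows that the trajectories $L_{P_{k}E_{l}}^{\ast}$ rise with asymptotic slope $1/3$; since $\{P_{k}E_{l},P_{k}E_{l+1}\}$ span the same plane as $\{P_{k},TP_{k}\}$ and are independent of $\{P_{k+1},TP_{k+1}\}$, this caps $L_{3}^{\ast},L_{4}^{\ast}$ from above by slope-$1/3$ lines on each interval $I_{k}$. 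Minkowski's constraint \eqref{eq:lsumme} then forces equality throughout, which simultaneously pins down $L_{1}^{\ast},L_{2}^{\ast}$ and yields $w_{3}=\rho$, $\lambda_{3}=1/\sqrt{5}$. The same four polynomials $\{P_{k+1},TP_{k+1},P_{k}E_{l},P_{k}E_{l+1}\}$ at $X=H(P_{k+1})$ give $w_{3,4}(\zeta)\ge 3$ directly, and the duality $w_{3,4}=\widehat{\lambda}_{3}^{-1}\le 3$ from \eqref{eq:debre} and \eqref{eq:ldiri} then delivers \eqref{eq:erste} without first knowing the full graph.
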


See the comments subsequent to Lemma~\ref{technisch} 
below for additional information on the dynamic
behavior of the successive minima as parametric functions. This dynamical point of view
will also enable us to derive the following Theorem~\ref{sternchen} from Theorem~\ref{ndrei}. As usual for an
algebraic number $\alpha$ we write $H(\alpha)=H(P)$ where $P\in\mathbb{Z}[T]$ is
the irreducible minimal polynomial of $\alpha$ over $\mathbb{Z}[T]$ with coprime coefficients.  

\begin{theorem} \label{sternchen}
Let $\zeta$ be an extremal number and $\epsilon>0$. Then the estimate
\begin{equation} \label{eq:trio}
\vert Q(\zeta)\vert \leq H(Q)^{-3-\epsilon}
\end{equation}
has only finitely many irreducible solutions $Q\in{\mathbb{Z}[T]}$ of degree precisely three.
In particular
\begin{equation} \label{eq:genaudrei1}
\vert \zeta-\alpha\vert \leq H(\alpha)^{-4-\epsilon} 
\end{equation}
has only finitely many algebraic solutions $\alpha$ of degree precisely three.
On the other hand the estimates
\begin{equation} \label{eq:otherhand}
\vert Q(\zeta)\vert \leq H(Q)^{-3+\epsilon}, \qquad \vert \zeta-\alpha\vert \leq H(\alpha)^{-4+\epsilon} 
\end{equation}
have solutions in irreducible polynomials
$Q$ of degree precisely three and algebraic $\alpha$ of degree precisely three of 
arbitrarily large heights $H(Q)$ and $H(\alpha)$. 
Moreover there are arbitrarily large $X$ such that
\begin{equation} \label{eq:trio2}
H(Q)\leq X, \qquad \vert Q(\zeta)\vert \leq X^{-\sqrt{5}-\epsilon}
\end{equation}
has no irreducible solution $Q\in{\mathbb{Z}[T]}$ of degree precisely three.  
In particular for arbitrarily large $X$ the system
\begin{equation} \label{eq:genaudrei2}
H(\alpha)\leq X, \qquad \vert \zeta-\alpha\vert \leq H(\alpha)^{-1}X^{-\sqrt{5}-\epsilon} 
\end{equation}
has no algebraic solution $\alpha$ of degree precisely three.
\end{theorem}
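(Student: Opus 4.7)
The plan is to derive Theorem~\ref{sternchen} from Theorem~\ref{ndrei} via the detailed description of the Schmidt-Summerer graph for the system $(1, \zeta, \zeta^2, \zeta^3)$ furnished by Lemma~\ref{technisch}, combined with Roy's structural result (Theorem~\ref{roythm}) for the polynomials $P_k$. A preliminary step is the standard translation between the polynomial statements \eqref{eq:trio}, \eqref{eq:trio2} and the corresponding algebraic approximation statements \eqref{eq:genaudrei1}, \eqref{eq:genaudrei2}: for an irreducible cubic $Q$ with root $\alpha$ closest to $\zeta$, Taylor expansion yields $|Q(\zeta)| \asymp |Q'(\alpha)||\zeta - \alpha|$ with $|Q'(\alpha)| \ll H(Q) = H(\alpha)$, so the conditions $|\zeta - \alpha| \leq H(\alpha)^{-4-\epsilon}$ and $|Q(\zeta)| \leq H(Q)^{-3-\epsilon}$ are equivalent up to adjusting $\epsilon$; the analogous conversion handles the $\sqrt{5}+\epsilon$ statements. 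It therefore suffices to establish \eqref{eq:trio} and \eqref{eq:trio2} in their polynomial form.

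For the finiteness of irreducible cubic solutions to \eqref{eq:trio}, I would argue by resultants. If $Q$ is irreducible of degree precisely three then $P_k$ and $Q$ are coprime (as $P_k$ is irreducible of degree two by Theorem~\ref{roythm}), so $\mathrm{Res}(P_k, Q) \in \mathbb{Z} \setminus \{0\}$ and $|\mathrm{Res}(P_k, Q)| \geq 1$. Writing $\mathrm{Res}(P_k, Q) = a_2^3 Q(\alpha_1) Q(\alpha_2)$ where $a_2 \asymp H(P_k)$ is the leading coefficient of $P_k$ (via \eqref{eq:strich}) and $\alpha_1, \alpha_2$ are its roots, with $|\zeta - \alpha_1| \asymp H(P_k)^{-\rho-1}$ and $\alpha_2$ bounded, one bounds the resultant from above via Taylor expansion of $Q$ at $\zeta$, using the hypothesis $|Q(\zeta)| \leq H(Q)^{-3-\epsilon}$ and $|P_k(\zeta)| \asymp H(P_k)^{-\rho}$. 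A careful choice of $k$ matching $H(P_k)$ to an appropriate power of $H(Q)$ then forces a contradiction for large $H(Q)$.

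For the existence statements in \eqref{eq:otherhand}, I would invoke Lemma~\ref{technisch}, which identifies the integer polynomials realizing the successive minima at each parameter. At scales $X = H(P_k)^{\nu}$ the second successive minimum has exponent $3$, realized by $\mathbb{Z}$-combinations of $(bT-a)P_k$ and $P_{k+1}$, where $a/b$ is a best linear approximation to $\zeta$ of appropriate height. An irreducibility check, e.g.\ via reduction modulo a suitable prime or a generic integer perturbation by $P_{k+1}$, produces infinitely many irreducible cubic examples, proving \eqref{eq:otherhand}. The gap statement \eqref{eq:trio2} exploits the dual observation: at the trough scales $X$ slightly exceeding $H(P_k)$, Lemma~\ref{technisch} asserts that every polynomial $P$ of degree $\leq 3$ with $H(P) \leq X$ and $|P(\zeta)| \leq X^{-\sqrt{5}-\epsilon}$ must lie in the $\mathbb{Z}$-span of $\{P_k, T P_k\}$, hence be divisible by $P_k$, and therefore reducible.

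The main technical obstacle lies in the resultant estimate of the finiteness step: naive Taylor-expansion bounds of the form $|\mathrm{Res}(P_k, Q)| \ll H(P_k)^3 H(Q) (|Q(\zeta)| + H(Q) H(P_k)^{-\rho-1})$ do not suffice to rule out exponents in the range $(3,\, 1+6/\sqrt{5})$, and one must exploit either higher-order Taylor cancellations or the combined resultants of $Q$ against both $P_k$ and $P_{k+1}$ simultaneously, taking advantage of $H(P_{k+1}) \asymp H(P_k)^{\nu}$ from Theorem~\ref{roythm}. The $\mathbb{Z}$-module structure of $\{P_k, T P_k, P_{k+1}, T P_{k+1}\}$ inside $\mathbb{Z}[T]_{\deg \leq 3}$, as described by Lemma~\ref{technisch}, furnishes the necessary machinery for this refinement.
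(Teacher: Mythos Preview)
Your argument for \eqref{eq:trio2} is exactly the paper's argument, and---this is the key observation you are missing---the \emph{same} argument disposes of \eqref{eq:trio} without any resultants. The structural fact you invoke (that every $P$ of degree $\leq 3$ with $H(P)\leq X$ and $|P(\zeta)|\leq X^{-\sqrt{5}-\epsilon}$ must lie in the span of $\{P_k, TP_k\}$ for a suitable $k$) is precisely the statement that the first two successive minima $\psi^{\ast}_{3,1},\psi^{\ast}_{3,2}$ are realized inside that two-dimensional space. The same description of the combined graph (which, incidentally, is established in the proof of Theorem~\ref{ndrei}, not in Lemma~\ref{technisch}---the latter is only a slope computation) gives $w_{3,3}(\zeta)=3$, see \eqref{eq:beweis}. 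Hence any $Q$ of degree $\leq 3$ with $|Q(\zeta)|\leq H(Q)^{-3-\epsilon}$ and $H(Q)$ large already lies in the span of the first two minima, so is a multiple of some $P_k$ and therefore reducible. Your resultant computation is unnecessary, and the technical obstacle you flag (the range $(3,\,1+6/\sqrt{5})$) simply does not arise.

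Two further points on \eqref{eq:otherhand}. First, the irreducibility of the perturbed polynomial needs a genuine argument, not a generic appeal to reduction modulo a prime. The paper takes $S_{k,j}=Q_{k,1}+jTP_{k+1}$ for $j\in\{1,2\}$ and argues that a putative factor $M$ of degree $\leq 2$ would satisfy $|M(\zeta)|\leq H(M)^{-3+2\epsilon}$; since $w_{2,2}(\zeta)=\tau<3$ by \eqref{eq:rhotau}, $M$ would have to be one of the $P_l$, and then $l\in\{k,k+1\}$ is excluded by construction while $l\leq k-1$ is excluded by a height comparison. Second, your preliminary translation between the polynomial and algebraic-number formulations only runs in one direction: from $|\zeta-\alpha|$ small to $|Q(\zeta)|$ small. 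To deduce the right half of \eqref{eq:otherhand} you need a \emph{lower} bound $|S_{k,j}^{\prime}(\zeta)|\gg X^{1-\epsilon}$, which is not automatic. The paper obtains it from \eqref{eq:strich}: since $|S_{k,2}^{\prime}(\zeta)-S_{k,1}^{\prime}(\zeta)|$ is essentially $|\zeta P_{k+1}^{\prime}(\zeta)|\gg X^{1-\epsilon}$, the derivative bound holds for at least one of $j=1,2$.
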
   

We strongly expect that the exponents in \eqref{eq:trio2} and \eqref{eq:genaudrei2}
are optimal as well. See the comments below the proof of Theorem~\ref{sternchen}
for a heuristic argument that supports this belief. Compare Theorem~\ref{sternchen} with 
the estimates concerning approximation by algebraic integers $\alpha$ at the end of
Section~\ref{sek1}.

\subsection{The case $n=4$}

We want to establish a lower bound for the exponent $\lambda_{4}(\zeta)$. Our result,
based on parametric geometry of numbers, is the following.

\begin{theorem} \label{nvier}
Let $\zeta$ be an extremal number. Then
\begin{equation} \label{eq:lammda}
\lambda_{4}(\zeta)\geq \frac{\gamma}{2}=\frac{\sqrt{5}-1}{4}.
\end{equation}
If $w_{4}(\zeta)=w_{2}(\zeta)=\rho$ then there is equality in \eqref{eq:lammda} and moreover
\begin{equation} \label{eq:uniformiert}
\widehat{w}_{4}(\zeta)=4, \qquad \widehat{\lambda}_{4}(\zeta)=\frac{1}{4}.
\end{equation}

\end{theorem}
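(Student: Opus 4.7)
\medskip

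\textbf{Proof plan.} I will work in the parametric geometry of numbers framework indicated in Section~\ref{para}, using Roy's polynomials $P_{k}$ from Theorem~\ref{roythm} as the principal source of small vectors in degree $\leq 4$. The key observation is that the three polynomials $P_{k}$, $TP_{k}$, $T^{2}P_{k}$ are linearly independent, have degree at most $4$, and satisfy
\[
|T^{j}P_{k}(\zeta)| = |\zeta|^{j}|P_{k}(\zeta)| \asymp H(P_{k})^{-\rho}, \qquad H(T^{j}P_{k}) \asymp H(P_{k}), \qquad j=0,1,2.
\]
At each Roy scale $H_{k}:=H(P_{k})$ this provides three linearly independent integer vectors in $\mathbb{Z}^{5}$ that are simultaneously small for the parametric convex body associated with degree-$4$ polynomial approximation of $\zeta$. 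Minkowski's second theorem, combined with the zero-sum identity $\sum_{j=1}^{5}L_{j}(q) = 0$ for the Schmidt--Summerer functions, then forces the two largest $L_{j}$ upward; dualising to the body for simultaneous approximation yields a lower bound on the slope of $L_{5}^{\ast}$, which is $\lambda_{4}(\zeta)$. A direct slope computation along the combined graph, carried out in the same style as for $n=3$ in the proof of Theorem~\ref{ndrei}, should produce the value $\gamma/2 = 1/(2\nu)$. This part is unconditional since the three-dimensional subspace of small vectors is available for every extremal number regardless of the value of $w_{4}(\zeta)$.

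For the conditional part, assume $w_{4}(\zeta)=\rho$. The matching upper bound is immediate from the right-hand Khintchine inequality in \eqref{eq:khintchine} applied with $n=4$, which gives
\[
\lambda_{4}(\zeta) \leq \frac{w_{4}(\zeta)-3}{4} = \frac{\rho-3}{4} = \frac{\sqrt{5}-1}{4} = \frac{\gamma}{2},
\]
so equality in \eqref{eq:lammda} follows. For $\widehat{w}_{4}(\zeta) = 4$, I would argue that on each interval $q \in [\log H_{k},\log H_{k+1}]$ the hypothesis $w_{4}(\zeta)=\rho$ precludes any degree-$4$ polynomial other than essentially $P_{k}$ itself from beating Dirichlet's bound, and for parameters $q$ close to $\log H_{k+1}$ the polynomial $P_{k}$ is no longer competitive against Dirichlet, whence $\widehat{w}_{4}(\zeta)\leq 4$, the reverse inequality being Dirichlet~\eqref{eq:wmono}. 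The identity $\widehat{\lambda}_{4}(\zeta)=1/4$ then follows by combining $\widehat{w}_{4}=4$ with the right-hand German inequality in \eqref{eq:ogerman}, which yields $\widehat{\lambda}_{4}(\zeta)\leq(\widehat{w}_{4}-3)/\widehat{w}_{4}=1/4$, against the Dirichlet lower bound~\eqref{eq:ldiri}.

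The principal obstacle is the unconditional lower bound \eqref{eq:lammda}: with five successive minima the combined graph for $n=4$ admits considerably more configurations than for $n=3$, and extracting the precise slope $\gamma/2$ requires a careful Minkowski bookkeeping together with the duality step sketched above. Once this lower bound is in hand, the conditional conclusions \eqref{eq:uniformiert} follow by the relatively clean transference arguments just described, exploiting the rigidity enforced on the combined graph by the hypothesis $w_{4}(\zeta)=\rho$.
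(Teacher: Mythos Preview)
Your plan has a genuine gap in the unconditional part. With only the three polynomials $P_{k}, TP_{k}, T^{2}P_{k}$ you control $\psi_{4,1}^{\ast},\psi_{4,2}^{\ast},\psi_{4,3}^{\ast}$ at the Roy scales, i.e.\ you obtain $w_{4,3}(\zeta)\geq \rho$. The zero-sum identity then bounds only the \emph{sum} $\psi_{4,4}^{\ast}+\psi_{4,5}^{\ast}$, and since $\psi_{4,4}^{\ast}\leq\psi_{4,5}^{\ast}$ you get at best $\overline{\psi}_{4,5}^{\ast}\geq -\tfrac{3}{2}\,\underline{\psi}_{4,3}^{\ast}$, which is precisely \eqref{eq:duales} with $j=3$. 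Carrying this through \eqref{eq:jaja} and \eqref{eq:umrechnen} yields $\lambda_{4}(\zeta)\geq \sqrt{5}/(6+\sqrt{5})\approx 0.2715$, strictly weaker than $\gamma/2\approx 0.3090$. So ``a direct slope computation\ldots should produce the value $\gamma/2$'' is in fact false with only three small vectors.

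The paper closes this gap by exhibiting a \emph{fourth} linearly independent polynomial $T_{k}=P_{k-1}P_{k-2}$ of degree four. Theorem~\ref{roythm} gives $H(P_{k-1})H(P_{k-2})\asymp H(P_{k})$ (since $\nu^{-1}+\nu^{-2}=1$) and $|T_{k}(\zeta)|\asymp H(P_{k})^{-\rho}$, while irreducibility of the $P_{j}$ forces linear independence of $\{P_{k},TP_{k},T^{2}P_{k},T_{k}\}$. This yields $w_{4,4}(\zeta)\geq\rho$, and now \eqref{eq:duales} with $j=4$ gives $\overline{\psi}_{4,5}^{\ast}\geq -4\,\underline{\psi}_{4,4}^{\ast}$, which is exactly the strength needed for $\lambda_{4}(\zeta)\geq\gamma/2$.

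For the conditional part your Khintchine argument for equality in \eqref{eq:lammda} is correct and matches the paper. Your proposed route to $\widehat{w}_{4}(\zeta)=4$, however, is only a sketch; the paper instead invokes Lemma~\ref{deckel}: once $w_{4}(\zeta)=\rho$ and $\lambda_{4}(\zeta)=\gamma/2$ are both established, equality holds in the right Khintchine inequality \eqref{eq:rechts} for $n=4$, and Lemma~\ref{deckel} immediately gives $\widehat{w}_{4}(\zeta)=4$ and $\widehat{\lambda}_{4}(\zeta)=1/4$ simultaneously.
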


Observe that $\rho\approx 4.2361>4$, so the assumption of the conditioned results are natural
and thus we believe that there is actually equality in \eqref{eq:lammda} and that \eqref{eq:uniformiert} holds.
The results of Section~\ref{sek31} also support this belief.
On the other hand \eqref{eq:wmono} prohibits $w_{n}(\zeta)=\rho$ for $n\geq 5$,
which in general prohibits the methods of the paper to work for $n\geq 5$.

The constant in \eqref{eq:lammda} is approximately $\gamma/2\approx 0.3090$. 
Observe that this improves the lower bound derived from $w_{4}(\zeta)\geq w_{2}(\zeta)=\rho$
in combination with Khintchine's transference inequalities \eqref{eq:khintchine},
which turns out to be $(2+\sqrt{5})/(10+3\sqrt{5})\approx 0.2535$, only slightly larger than
the trivial bound $1/4$ from \eqref{eq:ldiri}. 

\section{Preparatory results}

\subsection{Parametric geometry of numbers} \label{para}

For the proofs of the new results
we introduce some concepts of the parametric geometry of numbers 
following Schmidt and Summerer~\cite{ss},~\cite{ssch}, where we develop
the theory only as far as it is needed for our purposes and slightly 
deviate from their notation. In particular we restrict
to the case of successive powers. Some more specific properties
will be carried out in Section~\ref{proofs} for immediate application
to preliminary results.
Let $\zeta\in{\mathbb{R}}$ be given and
$Q>1$ a parameter. For $n\geq 1$ and $1\leq j\leq n+1$
define $\psi_{n,j}(Q)$ as the minimum of $\eta\in{\mathbb{R}}$ such that
\[
\vert x\vert \leq Q^{1+\eta}, \qquad \max_{1\leq j\leq n} \vert \zeta^{j}x-y_{j}\vert\leq Q^{-\frac{1}{n}+\eta} 
\]
has (at least) $j$ linearly independent solutions $(x,y_{1},\ldots,y_{n})\in{\mathbb{Z}^{n+1}}$. 
The functions $\psi_{n,j}(Q)$ can be equivalently defined via a lattice point problem, see~\cite{ss}. They have the properties
\[
-1\leq \psi_{n,j}(Q)\leq \frac{1}{n}, \qquad \qquad Q>1,\quad 1\leq j\leq n+1.
\]
Let 
\[
\underline{\psi}_{n,j}=\liminf_{Q\to\infty} \psi_{n,j}(Q),
\qquad \overline{\psi}_{n,j}=\limsup_{Q\to\infty} \psi_{n,j}(Q).
\]
These values clearly all lie in the interval $[-1,1/n]$. From Dirichlet's Theorem it follows that
$\psi_{n,1}(Q)\leq 0$ for all $Q>1$ and hence $\overline{\psi}_{n,1}\leq 0$.
For our purposes even more important will be the functions $\psi_{n,j}^{\ast}(Q)$
from~\cite{ss}. For $1\leq j\leq n+1$ and a parameter $Q>1$, define the value $\psi_{n,j}^{\ast}(Q)$ 
as the minimum of $\eta\in{\mathbb{R}}$ such that
\[
\vert H(P)\vert \leq Q^{\frac{1}{n}+\eta}, \qquad 
\vert P(\zeta)\vert\leq Q^{-1+\eta} 
\]
has (at least) $j$ linearly independent solutions in polynomials 
$P\in\mathbb{Z}[T]$ of degree at most $n$. 
See~\cite{ss} for the connection of the functions $\psi_{n,j}^{\ast}$
to a related lattice point problem, similarly as for simultaneous approximation.
Again put
\[
\underline{\psi}_{n,j}^{\ast}=\liminf_{Q\to\infty} \psi_{n,j}^{\ast}(Q),
\qquad \overline{\psi}_{n,j}^{\ast}=\limsup_{Q\to\infty} \psi_{n,j}^{\ast}(Q).
\]
For transcendental $\zeta$ Schmidt and Summerer~\cite[(1.11)]{ssch} established the inequalities 
\[
j\underline{\psi}_{n,j}+(n+1-j)\overline{\psi}_{n,n+1}\geq 0, 
\qquad j\overline{\psi}_{n,j}+(n+1-j)\underline{\psi}_{n,n+1}\geq 0,
\]
for $1\leq j\leq n+1$. The dual inequalities
\begin{equation} \label{eq:duales}
j\underline{\psi}_{n,j}^{\ast}+(n+1-j)\overline{\psi}_{n,n+1}^{\ast}\geq 0, 
\qquad j\overline{\psi}_{n,j}^{\ast}+(n+1-j)\underline{\psi}_{n,n+1}^{\ast}\geq 0,
\end{equation}
hold as well for the same reason.
As pointed out in~\cite{ss} Mahler's inequality implies
\begin{equation} \label{eq:mahler}
\vert\psi_{n,j}(Q)+\psi_{n,n+2-j}^{\ast}(Q)\vert \ll \frac{1}{\log Q}, \qquad 1\leq j\leq n+1. 
\end{equation}
In particular we have
\begin{equation} \label{eq:jaja}
\underline{\psi}_{n,j}=-\overline{\psi}_{n,n+2-j}^{\ast}, \qquad 
\overline{\psi}_{n,j}=-\underline{\psi}_{n,n+2-j}^{\ast},
\qquad 1\leq j\leq n+1.
\end{equation}
In particular all values $\underline{\psi}_{n,j}^{\ast}, \overline{\psi}_{n,j}^{\ast}$
lie in the interval $[-\frac{1}{n},1]$, and $\overline{\psi}_{n,1}^{\ast}\leq 0$ follows again
from Dirichlet's Theorem.
The constants $\underline{\psi}_{n,j}, \overline{\psi}_{n,j},\underline{\psi}_{n,j}^{\ast},\overline{\psi}_{n,j}^{\ast}$
relate to the classical approximation constants $\lambda_{n,j}=\lambda_{n,j}(\zeta),
w_{n,j}=w_{n,j}(\zeta)$ assigned to real $\zeta$ via
\begin{equation} \label{eq:umrechnen}
(1+\lambda_{n,j})(1+\underline{\psi}_{n,j})=
(1+\widehat{\lambda}_{n,j})(1+\overline{\psi}_{n,j})=\frac{n+1}{n}, \qquad 1\leq j\leq n+1,
\end{equation}
and 
\begin{equation} \label{eq:umrechnen2}
(1+w_{n,j})\Big(\frac{1}{n}+\underline{\psi}_{n,j}^{\ast}\Big)=
(1+\widehat{w}_{n,j})\Big(\frac{1}{n}+\overline{\psi}_{n,j}^{\ast}\Big)=\frac{n+1}{n}, \qquad 1\leq j\leq n+1.
\end{equation}
See~\cite[Theorem~1.4]{ss} for a proof of $j=1$ which can be readily extended to
the case of arbitrary $1\leq j\leq n+1$ as noticed in~\cite{j1}. 
From repeated application of \eqref{eq:jaja}, \eqref{eq:umrechnen} and \eqref{eq:umrechnen2} 
one can deduce
\begin{equation} \label{eq:debre}
\lambda_{n,j}(\zeta)= \frac{1}{\widehat{w}_{n,n+2-j}(\zeta)}, \qquad
\widehat{\lambda}_{n,j}(\zeta)= \frac{1}{w_{n,n+2-j}(\zeta)},
\end{equation}
for $1\leq j\leq n+1$, already noticed in~\cite{j2}. For $q>0$ we also define the functions
\begin{equation} \label{eq:lpsi}
L_{n,j}(q)= q\psi_{n,j}(Q), \qquad L_{n,j}^{\ast}(q)= q\psi_{n,j}^{\ast}(Q),
\end{equation}
where $Q=e^{q}$. They are piecewise linear with slopes among $\{-1,1/n\}$ and $\{-1/n,1\}$ respectively.
More precisely locally any $L_{n,j}$ coincides with some
\begin{equation} \label{eq:netto}
L_{\underline{x}}(q)=\max \left\{ \log \vert x\vert-q, 
\max_{1\leq j\leq n} \log \vert \zeta^{j}x-y_{j}\vert+\frac{q}{n}\right\}  
\end{equation}
where $\underline{x}=(x,y_{1},\ldots,y_{n})\in\mathbb{Z}^{n+1}$ for $y_{j}$ 
the closest integer to $\zeta^{j}x$, see \cite[page~75]{ss}.
Similarly any $L_{n,j}^{\ast}$ coincides locally with
\begin{equation} \label{eq:incide}
L_{P}^{\ast}(q)=\max \left\{ \log H(P)-\frac{q}{n}, 
\log \vert P(\zeta)\vert+q \right\}  
\end{equation}
for some $P\in\mathbb{Z}[T]$ of degree at most $n$. Observe that for fixed $P$ the left 
expression in \eqref{eq:incide} decays 
with slope $-1/n$ whereas the right expression rises with slope $1$ in the parameter $q$.
Consequently, at a local maximum of some $L_{n,j}^{\ast}$, the rising right expression of some $L_{P}^{\ast}(q)$ meets the falling left expression of some
$L_{Q}^{\ast}(q)$ with $H(Q)>H(P)$, and similarly for local 
maxima of $L_{n,j}$. On the other hand, at any local minimum $q$
of some $L_{n,j}^{\ast}$ there is either
equality in the expressions in \eqref{eq:incide} for some $P$, or the rising 
phase of some $L_{P}^{\ast}$ meets the falling phase of some $L_{Q}^{\ast}$
for some $Q$ with $H(Q)>H(P)$. In the first case, which always applies for $j=1$,
the function $L_{n,j}^{\ast}$ coincides with $L_{P}^{\ast}$ in a neighborhood of $q$.
The situation is again very similar for $L_{n,j}$. 
The identity \eqref{eq:umrechnen2}
has a parametric version in the sense that for any $(Q,\psi_{n,j}^{\ast}(Q))$
in the graph of some function $\psi_{n,j}^{\ast}$ then there exist $j$ linearly
independent polynomials $P_{1},\ldots,P_{j}\in\mathbb{Z}[T]$ 
of degree at most $n$ such that
\begin{equation}\label{eq:umrechnen3}
(1+w_{n}^{(j)})\left(\frac{1}{n}+\psi_{n,j}^{\ast}(Q)\right)= \frac{n+1}{n}+o(1), \qquad Q\to\infty,
\end{equation}
holds where
\[
w_{n}^{(j)}:= \frac{\min_{1\leq i\leq j} (-\log \vert P_{i}(\zeta)\vert)}{\max_{1\leq i\leq j}\log H(P_{i})},
\]
and vice versa. Very
similarly a dual parametric version of \eqref{eq:umrechnen} for the functions $\psi_{n,j}(Q)$
can be obtained. Both versions are basically inherited from the proof of~\cite[Theorem~1.4]{ss}.
A crucial observation for the parametric geometry of numbers developed in~\cite{ss},~\cite{ssch}
is that Minkowski's second lattice point Theorem translates into
\begin{equation} \label{eq:lsumme}
\left\vert \sum_{j=1}^{n+1} L_{n,j}(q)\right\vert \ll 1, \qquad 
\left\vert \sum_{j=1}^{n+1} L_{n,j}^{\ast}(q)\right\vert \ll 1.
\end{equation}
This implies that in any interval $I=(q_{1},q_{2})$ the sum of the differences
$L_{n,j}(q_{2})-L_{n,j}(q_{1})$ and $L_{n,j}^{\ast}(q_{2})-L_{n,j}^{\ast}(q_{1})$ 
over $1\leq j\leq n+1$
are bounded in absolute value as well by a fixed constant independent of $I$. 
We will implicitly use this fact in the proof of Theorem~\ref{ndrei}. This argument
is widely used in~\cite{ssch}.

\subsection{Two technical lemmata}

For the conditioned result \eqref{eq:uniformiert} we need (parts of) Lemma~\ref{deckel}
which is of some interest on its own. 
For its proof we will use that every local maximum of $L_{n,1}$ is 
a local minimum of $L_{n,2}$ (note: the analogue is in general false 
for $L_{n,j}, L_{n,j+1}$ when $j>1$). 
This follows from the elementary fact that for 
any vector $\underline{x}=(x,y_{1},\ldots,y_{n})\in\mathbb{Z}^{n+1}$ 
clearly any integral multiple $N\underline{x}$ cannot lead to a smaller value in \eqref{eq:lambda}.
Hence if two functions $L_{\underline{x}_{1}}, L_{\underline{x}_{2}}$ as in \eqref{eq:netto} 
induce two (successive) falling slopes $-1$ of $L_{n,1}$, with some rising phase of $L_{n,1}$ 
of slope $1/n$ in between, 
then the corresponding vectors $\underline{x}_{1}, \underline{x}_{2}$ are linearly independent, 
and the claim follows. Moreover we use $L_{n,1}(q)<0$ for all $q>0$, which is equivalent to Dirichlet's Theorem.

\begin{lemma} \label{deckel}
Let $n\geq 1$ be an integer and $\zeta$ be a real transcendental number. Assume
there is equality in either inequality of \eqref{eq:khintchine}, that is either
\begin{equation} \label{eq:rechts}
n\lambda_{n}(\zeta)+n-1=w_{n}(\zeta)
\end{equation}
or 
\begin{equation} \label{eq:links}
\lambda_{n}(\zeta)=\frac{w_{n}(\zeta)}{(n-1)w_{n}(\zeta)+n}
\end{equation}
holds. Then $\widehat{\lambda}_{n}(\zeta)=1/n$ and $\widehat{w}_{n}(\zeta)=n$.
\end{lemma}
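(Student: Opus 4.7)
The plan is to pass to the parametric geometry of numbers, where Khintchine's equality translates into the equality case of one of the Schmidt--Summerer inequalities stated above \eqref{eq:duales}, and then to exploit the resulting rigid graph structure to force the uniform exponents to their Dirichlet values. I will describe the argument for \eqref{eq:rechts}; the case of \eqref{eq:links} follows by the dual argument performed on the graph of the $L_{n,j}^{\ast}$, using \eqref{eq:mahler} to pass between the two graphs.

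Substituting \eqref{eq:umrechnen} and \eqref{eq:umrechnen2} with $j=1$ into \eqref{eq:rechts} and eliminating $\lambda_n(\zeta)$ and $w_n(\zeta)$ yields the equivalence of \eqref{eq:rechts} with the identity $\underline{\psi}_{n,1}(\zeta)=n\,\underline{\psi}_{n,1}^{\ast}(\zeta)$. Via \eqref{eq:jaja} this becomes the equality case of the Schmidt--Summerer inequality $\underline{\psi}_{n,1}+n\overline{\psi}_{n,n+1}\geq 0$ stated before \eqref{eq:duales}; analogously, \eqref{eq:links} corresponds to equality in $\underline{\psi}_{n,1}^{\ast}+n\overline{\psi}_{n,n+1}^{\ast}\geq 0$. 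Throughout I may assume $b:=\underline{\psi}_{n,1}(\zeta)<0$, since otherwise $\lambda_n(\zeta)=1/n$ (and likewise $w_n(\zeta)=n$), and then the conclusion is forced by monotonicity and Dirichlet.

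Choose a sequence $q_k\to\infty$ with $L_{n,1}(q_k)/q_k\to b$. At each $q_k$, the function $L_{n,1}$ coincides with $L_{\underline{x}_k}$ for an integer vector $\underline{x}_k$ whose trace is V-shaped with minimum at $q_k$. Combining the pointwise bound $L_{n,1}(q)+nL_{n,n+1}(q)\geq \sum_jL_{n,j}(q)$ (from the ordering $L_{n,j}\leq L_{n,n+1}$) with \eqref{eq:lsumme} and the limsup identity $\overline{\psi}_{n,n+1}=-b/n$ forced by the equality case, I obtain the rigid configuration $L_{n,j}(q_k)=-bq_k/n+o(q_k)$ for every $j\geq 2$. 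Using the hint preceding the lemma together with the slope constraint that at any $q$ exactly one of the $L_{n,j}$ can carry slope $-1$ (a direct consequence of \eqref{eq:lsumme} and the allowed slopes $\{-1,1/n\}$), I then trace the evolution between consecutive $q_k$ and compare the ratio $q_{k+1}/q_k$ obtained from the $L_{n,1}$-trajectory with the one obtained by enforcing the rigid configuration at $q_{k+1}$. A direct computation shows that these two ratios coincide only if $b=0$, so under $b<0$ additional local maxima of $L_{n,1}$ must occur between $q_k$ and $q_{k+1}$, and iterating the incompatibility forces the heights of these additional maxima to satisfy $L_{n,1}(\widetilde{q})/\widetilde{q}\to 0$. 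Combined with the Dirichlet bound $L_{n,1}(q)\leq 0$, this gives $\overline{\psi}_{n,1}=0$ and hence $\widehat{\lambda}_n(\zeta)=1/n$.

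The companion assertion $\widehat{w}_n(\zeta)=n$, equivalent to $\overline{\psi}_{n,1}^{\ast}=0$, follows by running the analogous analysis on the graph of the $L_{n,j}^{\ast}$, which has slopes in $\{-1/n,1\}$ and enjoys the Mahler-type duality \eqref{eq:mahler}; the rigid configuration at the dual extremal parameters is then imposed by the already-established identity $b=n\underline{\psi}_{n,1}^{\ast}$. I expect the main obstacle to be precisely the inductive step that turns the incompatibility of the candidate steady-state pattern into the quantitative statement that the peaks of $L_{n,1}$ tend to the line $L=0$; this is a purely combinatorial analysis of the slope pattern of the combined graph but requires care to rule out intricate nested oscillations between the "deep" minima $q_k$.
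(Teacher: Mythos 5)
Your reduction to parametric geometry is sound and essentially the paper's: eliminating $\lambda_n,w_n$ via \eqref{eq:umrechnen}, \eqref{eq:umrechnen2} shows \eqref{eq:rechts} is the equality case $\underline{\psi}_{n,1}=n\underline{\psi}_{n,1}^{\ast}=-n\overline{\psi}_{n,n+1}$, and your squeeze using \eqref{eq:lsumme} and the ordering of the minima to get the rigid configuration $L_{n,j}(q_k)=-bq_k/n+o(q_k)$ for $j\geq 2$ at the deep minima of $L_{n,1}$ is correct (and somewhat more self-contained than the paper, which quotes the Schmidt--Summerer remark). The genuine gap is the heart of the lemma: deducing $\overline{\psi}_{n,1}=0$ from this configuration. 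Your plan --- compare the ratio $q_{k+1}/q_k$ of a hypothetical steady-state pattern, conclude extra local maxima of $L_{n,1}$ must occur, then ``iterate the incompatibility'' until the peak heights are $o(q)$ --- is never made precise, and you yourself flag the inductive step and the possibility of nested oscillations as the main obstacle. The paper needs none of this global tracing; it uses a purely local argument your sketch is missing: let $q_0$ be the last local maximum of $L_{n,1}$ before a deep minimum $q_1$. By the observation preceding the lemma, $L_{n,1}(q_0)=L_{n,2}(q_0)$; on $[q_0,q_1]$ the gap $L_{n,2}-L_{n,1}$ grows at rate at most $1+1/n$ (slope $-1$ for $L_{n,1}$, at most $1/n$ for $L_{n,2}$), while the rigid configuration forces it to be at least $\vert b\vert(1+1/n)q_1-o(q_1)$ at $q_1$. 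Hence $q_1-q_0\geq \vert b\vert q_1-o(q_1)$, and since $L_{n,1}$ has slope $-1$ on $[q_0,q_1]$ we get $L_{n,1}(q_0)\geq bq_1+\vert b\vert q_1-o(q_1)=-o(q_1)$; with $q_1\ll q_0$ (valid when $\lambda_n(\zeta)<\infty$, i.e.\ $b>-1$) this exhibits peaks of relative height $o(q_0)$, so $\overline{\psi}_{n,1}=0$. Note also that your auxiliary claim that at every $q$ exactly one $L_{n,j}$ carries slope $-1$ is not a pointwise consequence of \eqref{eq:lsumme}; it holds only up to intervals of bounded length, and it is not needed.

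Two further points. First, you do not treat $\lambda_n(\zeta)=\infty$ (i.e.\ $b=-1$), where the comparability of a deep minimum with the preceding peak breaks down; the paper disposes of this case separately by quoting results of \cite{schlei}. Second, your route to $\widehat{w}_n(\zeta)=n$ under hypothesis \eqref{eq:rechts} --- rerunning the analysis on the $L^{\ast}$-graph with rigidity ``imposed by $b=n\underline{\psi}_{n,1}^{\ast}$'' --- does not work as stated: the squeeze producing the rigid configuration used equality in $\underline{\psi}_{n,1}+n\overline{\psi}_{n,n+1}\geq 0$, and its dual analogue is the equality case of the \emph{other} Khintchine inequality, i.e.\ of \eqref{eq:links}, which you are not assuming. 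The correct step is the standard equivalence of $\widehat{\lambda}_n(\zeta)=1/n$ and $\widehat{w}_n(\zeta)=n$, e.g.\ via \eqref{eq:ogerman} with \eqref{eq:ldiri} and \eqref{eq:wmono}, or directly: $\overline{\psi}_{n,1}=0$ gives $\underline{\psi}_{n,n+1}^{\ast}=0$ by \eqref{eq:jaja}, and then \eqref{eq:duales} with $j=1$ together with Dirichlet yields $\overline{\psi}_{n,1}^{\ast}=0$.
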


\begin{proof}
Assume there is equality in the right inequality, that is $n\lambda_{n}(\zeta)+n-1=w_{n}(\zeta)$.
In case of $\lambda_{n}(\zeta)=\infty$ we have $\widehat{\lambda}_{n}(\zeta)=1/n$ and $\widehat{w}_{n}(\zeta)=n$
anyway by~\cite[Theorem~1.12 and Theorem~5.1]{schlei}.                              
Hence we can assume $\lambda_{n}(\zeta)<\infty$ which will simplify the estimates.
It suffices to show $\widehat{\lambda}_{n}(\zeta)=1/n$ since the two claims are well-known 
to be equivalent, which follows for example from \eqref{eq:ogerman}. 
It was shown by Schmidt and Summerer in the remark on page 80 below the proof of Theorem~1.4 
in~\cite{ss} that the right inequality in \eqref{eq:khintchine} is equivalent
to $\underline{\psi}_{n,1}+n \overline{\psi}_{n,n+1}\geq 0$. It follows directly from their 
deduction of the mentioned remark
that more generally the identity \eqref{eq:rechts} implies that for any $\varepsilon>0$
there exist arbitrarily large parameters $Q$ such that
\[
\vert\psi_{n,1}(Q)+n\psi_{n,j}(Q)\vert < \varepsilon, \qquad 2\leq j\leq n+1,
\]
where $Q$ can be chosen so that simultaneously $\psi_{n,1}(Q)$ is arbitrarily close to $\underline{\psi}_{n,1}$
and $\psi_{n,j}(Q)$ is arbitrarily close to $\overline{\psi}_{n,j}$ for $2\leq j\leq n+1$.
In particular, the identity \eqref{eq:rechts} implies 
\begin{equation} \label{eq:charakter}
\underline{\psi}_{n,1}=-n \overline{\psi}_{n,2}=-n \overline{\psi}_{n,3}=\cdots =-n \overline{\psi}_{n,n+1}
\end{equation}
and that for any $\epsilon>0$ and the (arbitrarily large) parameters $Q$ as above the estimate
\begin{equation}  \label{eq:sterndi}
0<\psi_{n,n+1}(Q)-\psi_{n,2}(Q)<\epsilon
\end{equation}
is satisfied. Moreover, since $\psi_{n,1}(Q)$ is close to $\underline{\psi}_{n,1}$,
we may assume that at such $Q$ the function $\psi_{n,1}$
has a local minimum, or equivalently $L_{n,1}$ has a local minimum at $\log Q$
(otherwise we get a contradiction to the definition of $\underline{\psi}_{1}$
either for some $\tilde{Q}<Q$ or some $\tilde{Q}>Q$
dependent on whether $\psi_{n,1}$ rises in some interval $(Q-\delta,Q)$ or decays in some interval $(Q,Q+\delta)$). 
Let $\epsilon>0$ and $Q_{1}$ be any fixed large value as above that in particular
satisfies \eqref{eq:sterndi}. Further let $q_{1}=\log Q_{1}$.
The estimate \eqref{eq:sterndi} can be written in terms of the functions $L_{n,.}$ in the way
\begin{equation} \label{eq:oben}
0<L_{n,n+1}(q_{1})-L_{n,2}(q_{1})<\epsilon\cdot q_{1}.
\end{equation}
From \eqref{eq:lsumme} we know that $L_{n,1}(q_{1})$ approximately equals $-\sum_{j=2}^{n+1} L_{n,j}(q_{1})$ up to addition of some constant, that is
\[
\left\vert L_{n,1}(q_{1})+\sum_{j=2}^{n+1} L_{n,j}(q_{1})\right\vert \leq C.
\] 
Since all $L_{n,2}(q_{1}),\ldots,L_{n,n+1}(q_{1})$ are roughly equal 
by \eqref{eq:oben}, we further deduce
\[
\vert L_{n,1}(q_{1})+nL_{n,2}(q_{1})\vert= 
\left\vert \left(L_{n,1}(q_{1})+\sum_{j=2}^{n+1} L_{n,j}(q_{1})\right)
+\sum_{j=2}^{n+1}(L_{n,2}(q_{1})-L_{n,j}(q_{1}))\right\vert
\leq C+n\epsilon q_{1},
\]
and hence in particular
\begin{equation} \label{eq:sieg}
L_{n,2}(q_{1})\geq -\frac{L_{n,1}(q_{1})}{n}-\epsilon q_{1}-\tilde{C}
\end{equation}
where $\tilde{C}=C/n$ is another constant. Now let $q_{0}$ be the largest value smaller than $q_{1}$
at which the function $L_{n,1}(q)$ has a local maximum.
Then by the assumption that $q_{1}$ is a local minimum of $L_{n,1}$ justified above, 
the function $L_{n,1}$ decays in the interval $[q_{0},q_{1}]$ with slope $-1$ such that
\begin{equation} \label{eq:haile}
L_{n,1}(q_{1})-L_{n,1}(q_{0})= q_{0}-q_{1}.
\end{equation}
On the other hand
\begin{equation} \label{eq:selasje}
L_{n,2}(q_{1})-L_{n,2}(q_{0})\leq \frac{q_{1}-q_{0}}{n}
\end{equation}
since the function $L_{n,2}(q)$ has slope at most $1/n$. Moreover, since any local 
maximum of $L_{n,1}(q)$ is a local minimum of $L_{n,2}(q)$, we have
\[
L_{n,1}(q_{0})=L_{n,2}(q_{0}).
\]
Combination with \eqref{eq:haile} and \eqref{eq:selasje} yields
\[
L_{n,2}(q_{1})-L_{n,1}(q_{1})\leq \Big(1+\frac{1}{n}\Big)(q_{1}-q_{0}).
\]
Together with \eqref{eq:sieg} we obtain
\[
L_{n,1}(q_{1}) \geq L_{n,2}(q_{1})-\Big(1+\frac{1}{n}\Big)(q_{1}-q_{0})
\geq -\frac{L_{n,1}(q_{1})}{n}-\epsilon q_{1}-\tilde{C}-\Big(1+\frac{1}{n}\Big)(q_{1}-q_{0})
\]
which yields
\[
L_{n,1}(q_{1}) \geq -\frac{n\epsilon}{n+1}q_{1}-\tilde{C}-(q_{1}-q_{0}).
\]
Together with \eqref{eq:haile} we infer
\[
L_{n,1}(q_{0}) \geq -\frac{n\epsilon}{n+1}q_{1}-\tilde{C}.
\]
Now the assumption $\lambda_{n}(\zeta)<\infty$ implies with \eqref{eq:umrechnen}
that $\underline{\psi}_{n,1}>-1$ and from this it is not hard to see that $q_{1}\ll q_{0}$
for all $q_{0},q_{1}$ as above with a constant depending only on $\lambda_{n}(\zeta)$
or equivalently $\underline{\psi}_{n,1}$. Hence, for $q_{0}>1$, we have
\[
0>L_{n,1}(q_{0}) \gg -\epsilon q_{0}. 
\]
Since by the transcendence of $\zeta$ the values $q_{0}$ induced from $q_{1}$ as above
clearly tend to infinity as $q_{1}$ does, we infer $\overline{\psi}_{n,1}=0$ as we may choose
$\epsilon$ arbitrarily small. By \eqref{eq:umrechnen} this is again
equivalent to $\widehat{\lambda}_{n}(\zeta)=1/n$. The proof in case of equality in the right inequality is finished. 

We only sketch the deduction of the dual result. Assume the identity \eqref{eq:links} holds. 
The dual characterization $\underline{\psi}_{n,1}^{\ast}+n \overline{\psi}_{n,n+1}^{\ast}\geq 0$
from~\cite{ss} for the related left inequality in \eqref{eq:khintchine} yields 
the dual characterization for the equality \eqref{eq:links} for the same reasons.
Proceeding as above yields very similarly as above
$0<\psi_{n,n+1}^{\ast}(Q)-\psi_{n,2}^{\ast}(Q)<\epsilon$ 
for large $Q$ for which $\log Q$ are local minima of $L_{n,1}^{\ast}$ and such
that $\psi_{n,1}^{\ast}(Q)$ is close to $\underline{\psi}_{n,1}^{\ast}$, dual to \eqref{eq:sterndi}.
For such $Q$ we now look at the smallest local maximum of $L_{n,1}^{\ast}$
greater than $\log Q$. Since all $L_{n,j}^{\ast}$ have slope within $\{-1/n,1\}$,
the claim $\widehat{w}_{n}(\zeta)=n$ follows very similarly incorporating that
any local maximum of $L_{n,1}^{\ast}$ is a local minimum of $L_{n,2}^{\ast}$ again.
\end{proof}

\begin{remark} \label{kuh}
We point out that the proof of Lemma~\ref{deckel} does not require that the point lies on the Veronese curve 
defined as $\{(t,t^{2},\ldots,t^{k}): t\in{\mathbb{R}}\}$. The only point where we used the special
form of successive powers was for $\lambda_{n}(\zeta)=\infty$, and in this case more concise estimates
show the claim as well. Hence the claim extends naturally to the analogue exponents
assigned to $\underline{\zeta}\in\mathbb{R}^{k}$ whose coordinates
are linearly independent together with $\{1\}$.
\end{remark}

It will be convenient to utilize the following Lemma~\ref{technisch} for the proof
of Theorem~\ref{ndrei}. Roughly 
speaking, it shows that multiplication of a polynomial $P$ with a polynomial $Q$ for which 
$\vert Q(\zeta)\vert\approx H(Q)^{-1}$ holds, induces an increase of the corresponding
function $L_{3,.}^{\ast}$ by $1/3$ in some interval. 
For fixed real $\zeta$ we will say a polynomial $P\in\mathbb{Z}[T]$ of degree at most $3$ 
{\em induces a point $(q,L_{P}^{\ast}(q))$ in the $3$-dimensional Schmidt-Summerer diagram} 
if $(q,L_{P}^{\ast}(q))$ is the local minimum of $L_{P}^{\ast}$ implicitly defined 
via $H(P), P(\zeta)$ by 
\begin{equation} \label{eq:diagramm}
L_{P}^{\ast}(q)=\log H(P)-\frac{q}{3}=\log \vert P(\zeta)\vert+q,
\end{equation}
consistent with \eqref{eq:incide}. 
Recall that any local minimum of some successive minimum function $L_{3,.}^{\ast}$ is obtained
as in \eqref{eq:diagramm} for some $P\in\mathbb{Z}[T]$.
\begin{lemma} \label{technisch}
Let $P,Q,R\in\mathbb{Z}[T]$ be of large heights and such that $R=PQ$ and $R$ has degree at most three. 
Assume $P$ induces the point $(q_{1},L_{P}^{\ast}(q_{1}))$
and $R$ induces the point $(q_{2},L_{R}^{\ast}(q_{2}))$
in the $3$-dimensional Schmidt-Summerer diagram. Further assume 
\begin{equation} \label{eq:nicht}
\vert Q(\zeta)\vert= H(Q)^{-1+\delta}
\end{equation}   
for $\delta$ of small absolute value, and that $(\log H(Q))^{-1}=O(\delta)$. Then 
\begin{equation} \label{eq:convers}
\frac{L_{R}^{\ast}(q_{2})-L_{P}^{\ast}(q_{1})}{q_{2}-q_{1}}=\frac{1}{3}+O(\delta).
\end{equation}

\end{lemma}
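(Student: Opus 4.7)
The plan is a direct computation once we unpack what ``$P$ induces the point $(q_1,L_P^\ast(q_1))$'' means. By \eqref{eq:diagramm}, at such a local minimum the two linear expressions in \eqref{eq:incide} coincide, so solving the linear system
\[
L_P^\ast(q_1)=\log H(P)-\frac{q_1}{3}=\log|P(\zeta)|+q_1
\]
yields
\[
q_1=\tfrac{3}{4}\bigl(\log H(P)-\log|P(\zeta)|\bigr),\qquad L_P^\ast(q_1)=\tfrac{3}{4}\log H(P)+\tfrac{1}{4}\log|P(\zeta)|.
\]
Exactly the same formulae hold for $R$ in place of $P$. First I would record these identities.

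Next I would use the factorisation $R=PQ$. Multiplicativity of the evaluation gives $\log|R(\zeta)|=\log|P(\zeta)|+\log|Q(\zeta)|$, and since all polynomials have degree at most three, the Gelfand--Mignotte inequalities yield $\log H(R)=\log H(P)+\log H(Q)+O(1)$ with an absolute implied constant. Substituting these into the formulae above gives
\begin{align*}
L_R^\ast(q_2)-L_P^\ast(q_1) &= \tfrac{3}{4}\log H(Q)+\tfrac{1}{4}\log|Q(\zeta)|+O(1),\\
q_2-q_1 &= \tfrac{3}{4}\log H(Q)-\tfrac{3}{4}\log|Q(\zeta)|+O(1).
\end{align*}
Now I would plug in the hypothesis \eqref{eq:nicht} in the form $\log|Q(\zeta)|=(-1+\delta)\log H(Q)$, obtaining
\[
L_R^\ast(q_2)-L_P^\ast(q_1)=\Bigl(\tfrac{1}{2}+\tfrac{\delta}{4}\Bigr)\log H(Q)+O(1),\qquad q_2-q_1=\Bigl(\tfrac{3}{2}-\tfrac{3\delta}{4}\Bigr)\log H(Q)+O(1).
\]

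The only remaining point is to absorb the $O(1)$ terms. Here I would invoke the second hypothesis $(\log H(Q))^{-1}=O(\delta)$, which forces $O(1)=O(\delta\log H(Q))$. Dividing the two displays above then gives
\[
\frac{L_R^\ast(q_2)-L_P^\ast(q_1)}{q_2-q_1}=\frac{\tfrac{1}{2}+O(\delta)}{\tfrac{3}{2}+O(\delta)}=\frac{1}{3}+O(\delta),
\]
which is exactly \eqref{eq:convers}. The main (minor) obstacle is the control of $\log H(PQ)$ in terms of $\log H(P)+\log H(Q)$; this is where one needs the bounded-degree hypothesis $\deg R\le 3$ to invoke Gelfand's inequality with an absolute constant, and everything else is linear algebra together with the two hypotheses \eqref{eq:nicht} and $(\log H(Q))^{-1}=O(\delta)$.
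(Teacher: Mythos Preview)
Your proof is correct and follows essentially the same approach as the paper: solve \eqref{eq:diagramm} for $(q_1,L_P^\ast(q_1))$ and $(q_2,L_R^\ast(q_2))$, use the multiplicativity $|R(\zeta)|=|P(\zeta)||Q(\zeta)|$ together with the bounded-degree height comparison $H(PQ)\asymp H(P)H(Q)$ (which the paper cites as \eqref{eq:multipli} rather than Gelfond--Mahler), substitute \eqref{eq:nicht}, and absorb the $O(1)$ terms via the hypothesis $(\log H(Q))^{-1}=O(\delta)$. The computations and the logical structure match the paper's proof line by line.
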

\begin{proof}
From \eqref{eq:diagramm} we calculate
\[
q_{1}= \frac{3}{4}\cdot (\log H(P)-\log \vert P(\zeta)\vert), \qquad 
L_{P}^{\ast}(q_{1})=\frac{3}{4}\cdot \log H(P) + \frac{1}{4}\cdot \log \vert P(\zeta)\vert.
\]
Similarly, we infer
\[
q_{2}= \frac{3}{4}\cdot (\log H(R)-\log \vert R(\zeta)\vert)=
\frac{3}{4}\cdot (\log H(P)+\log H(Q)+\Delta-(\log \vert P(\zeta)\vert+\log \vert Q(\zeta)\vert)),
\]
and 
\[
L_{R}^{\ast}(q_{2})=\frac{3}{4}\cdot (\log H(P)+\log H(Q)+\Delta)+
\frac{1}{4}\cdot (\log \vert P(\zeta)\vert+\log \vert Q(\zeta)\vert)
\]
where $\Delta$ is bounded by virtue of \eqref{eq:multipli} below. Inserting yields
\[
\frac{L_{R}^{\ast}(q_{2})-L_{P}^{\ast}(q_{1})}{q_{2}-q_{1}}=
\frac{\frac{3}{4} \log H(Q)+\frac{1}{4} \log \vert Q(\zeta)\vert+ \frac{3}{4}\Delta}
{\frac{3}{4} \log H(Q)-\frac{3}{4} \log \vert Q(\zeta)\vert+ \frac{3}{4}\Delta},
\]
and with the assumption \eqref{eq:nicht} further
\[
\frac{L_{R}^{\ast}(q_{2})-L_{P}^{\ast}(q_{1})}{q_{2}-q_{1}}=
\frac{\left(\frac{1}{2}+\frac{1}{4}\delta\right)\log H(Q)+ \frac{3}{4}\Delta}
{\left(\frac{3}{2}-\frac{3}{4}\delta\right)\log H(Q)+ \frac{3}{4}\Delta}.
\]
The claim follows by elementary rearrangements using the assumption $(\log H(Q))^{-1}=O(\delta)$.
\end{proof}

Conversely \eqref{eq:convers} implies that $\log \vert Q(\zeta)\vert/\log H(Q)+1$ is small
by a very similar argument, but we will not use this. Again the proposition did not use
the fact that we deal with successive powers of a number, and can be generalized to any dimension. 

\section{Proofs of Theorems~\ref{ndrei},~\ref{sternchen} and \ref{nvier}} \label{proofs}

Apart from Theorem~\ref{roythm} and the concepts of Section~\ref{para},
we will use that
for any polynomials $Q_{1},Q_{2}$ with integral coefficients of degree bounded by $n$ we have
\begin{equation} \label{eq:multipli}
H(Q_{1}Q_{2})\asymp_{n} H(Q_{1})H(Q_{2}).
\end{equation}
See~\cite[Hilfssatz~3]{wirsing}. As in our applications the dimensions $n$ are fixed
we can assume absolute constants in \eqref{eq:multipli}.
We will sometimes implicitly use the consequence 
that if $Q=Q_{1}Q_{2}$ then $\vert Q(\zeta)\vert \leq H(Q)^{-z}$ 
implies that either $\vert Q_{1}(\zeta)\vert \ll H(Q_{1})^{-z}$
or $\vert Q_{2}(\zeta)\vert \ll H(Q_{2})^{-z}$ must be satisfied, which
was essentially used by Wirsing~\cite{wirsing}.
We start with the proof of Theorem~\ref{nvier} since it is the least technical one.

\begin{proof}[Proof of Theorem~\ref{nvier}]
We will prove that any extremal number $\zeta$ satisfies
\begin{equation}  \label{eq:vierte}
w_{4,4}(\zeta)\geq \rho.
\end{equation}
Assume we have already shown \eqref{eq:vierte}. Then the 
unconditional claim \eqref{eq:lammda} follows from iterated use
of results from parametric geometry of numbers. Indeed, from \eqref{eq:vierte} applying \eqref{eq:umrechnen2} with $n=j=4$ we first obtain
\begin{equation}  \label{eq:armst}
\underline{\psi}_{4,4}^{\ast}\leq \frac{2-\sqrt{5}}{4(3+\sqrt{5})}.
\end{equation}
In view of \eqref{eq:jaja} and \eqref{eq:duales} applied with $n=j=4$, we obtain 
\begin{equation} \label{eq:recal}
\underline{\psi}_{4,1}=-\overline{\psi}_{4,5}^{\ast} \leq 4\cdot \underline{\psi}_{4,4}^{\ast} 
\leq \frac{2-\sqrt{5}}{3+\sqrt{5}}. 
\end{equation}
Eventually computing the corresponding value of $\lambda_{4}$
by applying \eqref{eq:umrechnen} with $n=4, j=1$ leads precisely to the lower bound $\gamma/2$ in the theorem.

We are left to prove \eqref{eq:vierte}. For this we use the characterization of the 
polynomials $P_{k}\in{\mathbb{Z}[T]}$ of degree $2$ for $n=2$ from Theorem~\ref{roythm}. 
Consider for fixed large $k$ three successive polynomials $P_{k-2},P_{k-1},P_{k}$. 
Then we know from Theorem~\ref{roythm} that 
\begin{equation} \label{eq:property}
\vert P_{j}(\zeta)\vert\asymp H(P_{j})^{-\rho}, \qquad j\in\{k-2,k-1,k\}.
\end{equation}
Applied with $j=k$ it is obvious that
the polynomials $R_{k}(T)=TP_{k}(T), S_{k}(T)=T^{2}P_{k}(T)$ have degrees $3$ and $4$,
heights $H(P_{k})=H(R_{k})=H(S_{k})$, and satisfy
\[
\vert P_{k}(\zeta)\vert\asymp_{\zeta} \vert R_{k}(\zeta)\vert\asymp_{\zeta} \vert S_{k}(\zeta)\vert 
\asymp_{\zeta} H(P_{k})^{-\rho}
\]
as well. The polynomials $P_{k},R_{k},S_{k}$ are obviously linearly independent 
and hence $w_{4,3}(\zeta)\geq \rho$.
As the fourth polynomial $T_{k}$ we take the product of $P_{k-1}$ and $P_{k-2}$.
First we show that $\{ P_{k},R_{k},S_{k},T_{k}\}$ are linearly independent. Otherwise $T_{k}=P_{k-1}P_{k-2}$
would lie in the $3$-dimensional space spanned by $P_{k},R_{k},S_{k}$, which by the special form
of $R_{k},S_{k}$ means $T_{k}=P_{k}Z$ for some polynomial $Z(T)\in{\mathbb{Q}[T]}$ of degree $2$.
However we know from Theorem~\ref{roythm} that the best approximating polynomials $P_{j}$ 
are irreducible over $\mathbb{Z}[T]$ for all large $j$. 
Hence by the unique factorization in $\mathbb{Z}[T]$ the polynomial
$P_{k}$ must equal (up to sign)
either $P_{k-1}$ or $P_{k-2}$, which is clearly false, contradiction.

Moreover from \eqref{eq:property} and the characterization in Theorem~\ref{roythm} it 
is known that $H(P_{k-2})^{\nu^{2}}\asymp H(P_{k-1})^{\nu}\asymp H(P_{k})$.
Since $\nu^{-1}+\nu^{-2}=1$ and $H(T_{k})\asymp H(P_{k-1})H(P_{k-2})$ by \eqref{eq:multipli},
we deduce $H(T_{k}) \asymp H(P_{k})$. Together with property \eqref{eq:property} for 
$j=k-1$ and $j=k-2$ we infer
\[
\vert T_{k}(\zeta)\vert=\vert P_{k-1}(\zeta)P_{k-2}(\zeta)\vert \asymp_{\zeta} H(P_{k-1})^{-\rho}H(P_{k-2})^{-\rho}
\asymp H(P_{k-1}P_{k-2})^{-\rho}\asymp H(P_{k})^{-\rho}.
\]
Summing up, we have found four linearly independent polynomials $P_{k},R_{k},S_{k},T_{k}$ 
with the properties
\[
H(P_{k})\asymp H(R_{k})\asymp H(S_{k})\asymp H(T_{k})
\]
and
\[
\vert P_{k}(\zeta)\vert \asymp_{\zeta} \vert R_{k}(\zeta)\vert \asymp_{\zeta} \vert S_{k}(\zeta)\vert 
\asymp_{\zeta} \vert T_{k}(\zeta)\vert \asymp_{\zeta} H(P_{k})^{-\rho}.
\]
Since this holds for any large $k$ we have established \eqref{eq:vierte}. 

Finally we show the conditioned results. The
equality $\lambda_{4}(\zeta)=\gamma/2$ follows immediately from Khintchine's inequalities \eqref{eq:khintchine}
since the upper bound for $\lambda_{4}(\zeta)$ that arises from
$n=4, w_{4}(\zeta)=\rho$, coincides with the lower bound $\gamma/2$
established above (the argument essentially used the characterization \eqref{eq:charakter}, \eqref{eq:sterndi}
for equality \eqref{eq:rechts} from~\cite{ss} used in the proof of Proposition~\ref{deckel}). 
Finally \eqref{eq:uniformiert} follows from Lemma~\ref{deckel}
since we have just shown that $w_{4}(\zeta)=\rho$ implies the identity \eqref{eq:rechts} for 
any extremal number $\zeta$ and $n=4$.
\end{proof}

\begin{remark} \label{hirsch}
It was essentially shown in the proof of~\cite[Theorem~2]{bug} that the condition
\begin{equation} \label{eq:bugbed}
w_{1}(\zeta)=w_{2}(\zeta)=\cdots=w_{n}(\zeta)
\end{equation}
implies \eqref{eq:rechts}. 
If the hypothesis $w_{4}(\zeta)=\rho$ of Theorem~\ref{nvier} holds
then its assertion and \eqref{eq:werte} show that extremal numbers 
provide counterexamples for the reverse implication
for $n=4$. In this context note that if $\lambda_{n}(\zeta)>1$ the claims \eqref{eq:bugbed} and \eqref{eq:rechts}
are indeed equivalent by~\cite[Theorem~5.4]{schlei}.
Note also that from Lemma~\ref{deckel} and the above implication we could deduce that \eqref{eq:bugbed}
implies $\widehat{\lambda}_{n}(\zeta)=1/n$ and $\widehat{w}_{n}(\zeta)=n$. 
However, the weaker condition $w_{1}(\zeta)\geq n$ already implies $\widehat{\lambda}_{n}(\zeta)=1/n$ 
and $\widehat{w}_{n}(\zeta)=n$ as established in~\cite[Theorem~5.1]{schlei}.
\end{remark}

The proof of Theorem~\ref{nvier} in fact provides upper bounds for the frequency of good
simultaneous rational approximations to $(\zeta,\zeta^{2},\zeta^{3},\zeta^{4})$. 
More precisely the proof shows that there exists a sequence $(x_{k})_{k\geq 1}$ of 
positive integers that satisfy
\[
x_{k+1}\ll x_{k}^{\nu}, \qquad
\max_{1\leq j\leq 4} \Vert x_{k}\zeta^{j}\Vert\ll x_{k}^{-\gamma/2}.
\]
In case of the conjectured equality in \eqref{eq:lammda} we even have
 \begin{equation} \label{eq:dennoch}
 x_{k+1}\asymp x_{k}^{\nu}, \qquad
 \max_{1\leq j\leq 4} \Vert x_{k}\zeta^{j}\Vert\asymp x_{k}^{-\gamma/2}.
 \end{equation}
Here as usual $\Vert.\Vert$ denotes the distance to the nearest integer. 
We briefly sketch how to deduce these facts from the proof above.
The polynomials $P_{k},R_{k},S_{k},T_{k}$ in the proof
which induce the bound for the value $\underline{\psi}_{4,4}^{\ast}$ in \eqref{eq:armst} 
appear with frequency $H(P_{k+1})\asymp H(P_{k})^{\nu}$ (and very
similarly for $R_{k}, S_{k}, T_{k}$). The last minimum $\psi_{4,5}^{\ast}(Q)$ 
at the corresponding positions $Q$ in the Schmidt-Summerer diagram is asymptotically
bounded below as in \eqref{eq:recal} and the corresponding polynomials appear with
the same logarithmic asymptotic height frequency $\nu$.
We now flip
the diagram along the horizontal axis according to \eqref{eq:mahler}
to obtain (roughly) the dual problem of simultaneous approximation. Thereby with simple geometric considerations involving \eqref{eq:netto} and reinterpreting
to classical exponents $\lambda_{4,.}$ we see that the first coordinates of best approximations related to the bound for $\underline{\psi}_{4,1}$
in \eqref{eq:recal} appear with frequency
$x_{k+1}\ll x_{k}^{\nu}$ as well (with a technical proof it possible to show 
that a single $x_{k}$ cannot induce the good approximations for two consecutive values 
of $Q$ obtained this way). In case of equality in \eqref{eq:lammda} the functions
$\psi_{4,1}(Q)$ must have a local minimum at such places $Q$ and \eqref{eq:dennoch} follows.
It is tempting to further
conjecture that for the corresponding approximation
vectors $(x_{k},y_{k,1},\ldots,y_{k,4})_{k\geq 1}$, 
where $x_{k}$ is as in \eqref{eq:dennoch} and $y_{k,j}$ is the closest integer to $\zeta^{j} x_{k}$,
similar general recursive patterns as for $n=2$ noticed in~\cite{royyy} exist.
However, we do not further investigate this topic here. 

We turn to the case $n=3$. For a real number $\zeta$ we define the sequence of $1$-dimensional
{\em best approximation polynomials} $(E_{l})_{l\geq 1}$ attached to $\zeta$. They are given by 
linear polynomials $E_{l}(T)=a_{l}T+b_{l}$ with $a_{l},b_{l}\in{\mathbb{Z}}$
defined by $E_{1}(T)=T-\lfloor \zeta\rfloor$ and $E_{l+1}$ is recursively defined via $E_{l}$
as the linear polynomial of least height for which $0<\vert E_{l+1}(\zeta)\vert<\vert E_{l}(\zeta)\vert$.
These polynomials obviously satisfy $H(E_{1})<H(E_{2})<\cdots$ and 
\[
E_{l}(\zeta)= \min \{ \vert Q(\zeta)\vert: Q\in\mathbb{Z}[T], \; \deg(Q)=1,\; 1\leq H(Q)\leq H(E_{l})\}.
\]
It follows from the theory of continued fractions that the rational numbers $b_{l}/a_{l}$ are
precisely the convergents to $\zeta$. 
Moreover by Dirichlet's Theorem the best approximating polynomials satisfy
\begin{equation} \label{eq:eeg}
\vert E_{l}(\zeta)\vert \ll_{\zeta} H(E_{l})^{-1}, \qquad l\geq 1.
\end{equation}
Furthermore it is well-known and follows from elementary results on the theory of continued fractions that
$\vert E_{l}(\zeta)\vert\asymp_{\zeta} H(E_{l+1})^{-1}$ for all irrational $\zeta$, which readily implies
\begin{equation} \label{eq:nurmehr}
1\leq \liminf_{l\to\infty} \frac{\log H(E_{l+1})}{\log H(E_{l})} \leq
\limsup_{l\to\infty} \frac{\log H(E_{l+1})}{\log H(E_{l})}= \lambda_{1}(\zeta).
\end{equation}

In view of the rather technical proof of
\eqref{eq:zweite}, for the convenience of the reader
we give a brief outline of some facts we will show in the course of the proof.
We will establish a rather precise description of the functions
$L_{3,1}^{\ast}(q),\ldots,L_{3,4}^{\ast}(q)$ on $q\in(0,\infty)$ 
induced by an extremal number, its square and its cube.
Denote by $\vert I\vert$ the length of an interval $I$. We will show
there exists a partition of the positive real numbers 
in successive intervals $I_{1},J_{1},I_{2},J_{2},\ldots$ with the following properties.
  
\begin{itemize}
\item $\lim_{k\to\infty} \vert I_{k}\vert/\vert J_{k}\vert=1$
\item $\lim_{k\to\infty} \vert I_{k+1}\vert/\vert I_{k}\vert=\lim_{k\to\infty} \vert J_{k+1}\vert/\vert J_{k}\vert=\nu$.
\item At the beginning of every $I_{k}$ all $L_{3,i}^{\ast}(q)$ are all small (more precisely
$o(q)$ as $q\to\infty$) by absolute value. Then in $I_{k}$ the
functions $L_{3,1}^{\ast}(q), L_{3,2}^{\ast}(q)$ basically decay with slope $-1/3$,
whereas $L_{3,3}^{\ast}(q),L_{3,4}^{\ast}(q)$ basically rise with slope $1/3$ in any not too short
subinterval of $I_{k}$ (clearly not in too short intervals, since $L_{3,.}^{\ast}$ have slope within $\{-1/3,1\}$). 
\item At the end of $I_{k}$ and 
beginning of $J_{k}$ the opposite behavior appears, that is $L_{3,1}^{\ast}(q), L_{3,2}^{\ast}(q)$ basically
rise with slope $1/3$ on any not too short subinterval of $J_{k}$,
whereas $L_{3,3}^{\ast}(q), L_{3,4}^{\ast}(q)$ basically decay with slope $-1/3$
until the functions $L_{3,1}^{\ast},\ldots,L_{3,4}^{\ast}$ asymptotically meet again at the end of $J_{k}$ 
which is the beginning of $I_{k+1}$. 
\item The functions $\vert L_{3,1}^{\ast}(q)-L_{3,2}^{\ast}(q)\vert$ 
such as $\vert L_{3,3}^{\ast}(q)-L_{3,4}^{\ast}(q)\vert$ are bounded uniformly in $q$.
\end{itemize}

All above is basically true for the simultaneous approximation functions $L_{3,j}(q)$ as well 
by \eqref{eq:jaja}. Observe that by the last point above in particular
\begin{align}
w_{3,1}(\zeta)&=w_{3,2}(\zeta), \quad w_{3,3}(\zeta)= w_{3,4}(\zeta), \quad
\widehat{w}_{3,1}(\zeta)=\widehat{w}_{3,2}(\zeta), 
\quad \widehat{w}_{3,3}(\zeta)=\widehat{w}_{3,4}(\zeta),     \label{eq:rumpel} \\
\lambda_{3,1}(\zeta)&=\lambda_{3,2}(\zeta), \quad \lambda_{3,3}(\zeta)= \lambda_{3,4}(\zeta), \quad
\widehat{\lambda}_{3,1}(\zeta)=\widehat{\lambda}_{3,2}(\zeta), 
\quad \widehat{\lambda}_{3,3}(\zeta)=\widehat{\lambda}_{3,4}(\zeta), \label{eq:stielzchen}
\end{align}
which extends the claim of Theorem~\ref{ndrei}. See also Remark~\ref{dasreh} below.
We point out that roughly speaking the decay phases of $L_{3,.}^{\ast}$ are induced by the polynomials
$P_{k}$ from Theorem~\ref{roythm}. The rising phases are induced by products $P_{k}E_{l}$
for fixed $P_{k}$ and suitable successive best approximating polynomials $E_{l}$ defined above, 
which indeed lead to asymptotic increase by $1/3$ as stated in the description above,
basically in view of Lemma~\ref{technisch}.

\begin{proof}[Proof of Theorem~\ref{ndrei}]
First we prove \eqref{eq:erste}. We show that
\begin{equation} \label{eq:34}
w_{3,4}(\zeta)\geq 3.
\end{equation}
Provided this is true it follows immediately that $w_{4,3}(\zeta)=\widehat{w}_{3}(\zeta)=3$, since 
$w_{3,4}(\zeta)=\widehat{\lambda}_{3}(\zeta)^{-1}\leq 3$ by \eqref{eq:debre} and \eqref{eq:ldiri}.
This argument in fact utilizes parametric geometry of numbers. 
Eventually it is well-known and follows for example from \eqref{eq:ogerman}
that both claims in \eqref{eq:erste} are equivalent.

For \eqref{eq:erste} it remains to be shown that \eqref{eq:34} holds. 
Let $k$ be fixed large and consider the polynomials $P_{k},P_{k+1},\ldots$ from
Theorem~\ref{roythm}, and let $R_{j}(T)=TP_{j}(T)$ for $j\geq k$.
Further let $X=H(P_{k+1})$. Then obviously $P_{k+1}(T)$ and $R_{k+1}(T)=TP_{k+1}$ satisfy
\begin{equation} \label{eq:fisch}
H(P_{k+1})=H(R_{k+1})=X, \qquad \vert P_{k+1}(\zeta)\vert \asymp_{\zeta} 
\vert R_{k+1}(\zeta)\vert \asymp_{\zeta} X^{-\rho}<X^{-3}.
\end{equation}
Let $\epsilon>0$. We shall construct polynomial multiples
\begin{equation} \label{eq:kulti}
Q_{k,1}=R_{k,1}\cdot P_{k}, \qquad Q_{k,2}=R_{k,2}\cdot P_{k}
\end{equation}
of $P_{k}$ with $R_{k,i}\in{\mathbb{Z}[T]}$ polynomials of 
degree one such that $\{ R_{k,1},R_{k,2}\}$ 
and hence also $\{ Q_{k,1},Q_{k,2}\}$ are linearly independent and satisfy
\begin{equation} \label{eq:thun}
H(Q_{k,i})\ll X, \qquad \vert Q_{k,i}(\zeta)\vert \ll X^{-3+\epsilon}, \qquad i\in\{1,2\}.
\end{equation}
One readily verifies that $\{ Q_{k,1},Q_{k,2}\}$ span the same space as $\{ P_{k},TP_{k}\}$ indifferent
which linear polynomials $R_{k,i}$ we choose. 
Observe that the space spanned by $\{ P_{k+1},R_{k+1},Q_{k,1},Q_{k,2}\}$ consequently
has dimension $4$. Indeed otherwise the polynomial identity $P_{k}(T)Y_{1}(T)=P_{k+1}(T)Y_{2}(T)$
would have linear integer polynomial solutions $Y_{1},Y_{2}$, contradiction since $P_{k},P_{k+1}$
have degree two and are irreducible and not proportional
and $\mathbb{Z}[T]$ has unique factorization. 
Hence from \eqref{eq:fisch} and \eqref{eq:thun} 
indeed the claim \eqref{eq:34} follows by considering $\{ P_{k+1},R_{k+1},Q_{k,1},Q_{k,2}\}$ 
as $\epsilon$ can be chosen arbitrarily small. To finally prove \eqref{eq:thun}, for 
the given $X=H(P_{k+1})$ we let
$R_{k,1}=E_{l}$ and $R_{k,2}=E_{l+1}$ be two successive best approximating polynomials 
in dimension $n=1$ as introduced before the proof
with $l$ chosen largest possible such that still $H(R_{k,i})H(P_{k})\leq X$ for $i\in\{1,2\}$. 
It follows from \eqref{eq:multipli} and \eqref{eq:kulti} that                        
\begin{equation} \label{eq:kurz}
H(Q_{k,i})\ll X, \qquad i\in\{1,2\}. 
\end{equation}
On the other hand, since extremal numbers satisfy $\lambda_{1}(\zeta)=1$ as mentioned in \eqref{eq:werte}, 
by \eqref{eq:nurmehr} the sequence $(E_{l}(T))_{l\geq 1}$ of 
best approximating polynomials in dimension $1$ satisfies 
\begin{equation} \label{eq:unibaer}
\lim_{l\to\infty} \frac{\log H(E_{l+1})}{\log H(E_{l})}=1, \qquad 
\lim_{l\to\infty} -\frac{\log \vert E_{l}(\zeta)\vert}{H(E_{l})}=1.
\end{equation}
Since $R_{k,1}=E_{l}, R_{k,2}=E_{l+1}$ and by our maximal choice of $l$, 
it is not hard to see that
\[
H(Q_{k,i})\geq X^{1-\epsilon}, \qquad i\in\{1,2\}.
\]
It further follows from \eqref{eq:multipli} and $H(P_{k+1})\asymp H(P_{k})^{\nu}$ 
or equivalently $H(P_{k})\asymp H(P_{k+1})^{\gamma}$ in view of Theorem~\ref{roythm}, that we have
\[
H(R_{k,i})\gg H(Q_{k,i})H(P_{k})^{-1}\gg
X^{1-\epsilon}H(P_{k})^{-1}\gg X^{1-\gamma-\epsilon}, \qquad i\in\{1,2\}.
\]
Together with \eqref{eq:eeg} this leads to
\[
\vert R_{k,i}(\zeta)\vert \ll_{\zeta} X^{-1+\gamma+\epsilon}, \qquad i\in\{1,2\}. 
\]
Hence 
\[
\vert Q_{k,i}(\zeta)\vert= \vert P_{k}(\zeta)\vert\cdot \vert R_{k,i}(\zeta)\vert
\ll_{\zeta} X^{-\rho \gamma}\cdot X^{-1+\gamma+\epsilon}=X^{-3+\epsilon}, \qquad i\in\{1,2\},
\]
where we used $\rho\gamma+1-\gamma=3$, which can be readily checked. Thus recalling \eqref{eq:kurz}
we have proved \eqref{eq:thun} and hence together with \eqref{eq:fisch} finally \eqref{eq:erste}.

Now we prove the more technical identities \eqref{eq:zweite}. In the proof of \eqref{eq:erste} 
above we have shown that for any large $k$, with $X=H(P_{k+1})$ we have four linearly independent 
polynomials $\{ T_{1},\ldots,T_{4}\}=\{ P_{k+1},R_{k+1},Q_{k,1},Q_{k,2}\}$ 
with $H(T_{i})\ll X$ and $\vert T_{i}(\zeta)\vert \leq X^{-3+\epsilon}$. 
Following the proof of \eqref{eq:umrechnen2}, this means that for arbitrarily small $\varepsilon>0$,
any large $k$ induces $q_{k}>0$ such that all 
\begin{equation} \label{eq:virtus}
\vert L_{3,i}^{\ast}(q_{k})\vert\leq \varepsilon q_{k}, \qquad 1\leq i\leq 4, 
\end{equation}
where $\lim_{k\to\infty} q_{k}/\log H(P_{k+1})=3$ in view of \eqref{eq:incide}. 
Since by Theorem~\ref{roythm} any polynomial $P_{k+1}$ induces an approximation 
of quality
\[
-\frac{\log \vert P_{k+1}(\zeta)\vert}{\log H(P_{k+1})}= \rho+o(1)>3,\qquad k\to\infty,
\]
and so does $R_{k+1}(T)=TP_{k+1}(T)$, it follows that $L_{3,1}^{\ast}$ and $L_{3,2}^{\ast}$ 
decay with asymptotic slope $-1/3$ in some interval $(q_{k},b_{k})$ and $(q_{k},c_{k})$ respectively,
for $b_{k}$ and $c_{k}$ local minima of $L_{3,1}^{\ast}$ and $L_{3,2}^{\ast}$ respectively.
More precisely, the local minima $(d_{k},L_{P_{k+1}}^{\ast}(d_{k}))$ and $(e_{k},L_{R_{k+1}}^{\ast}(e_{k}))$
of the functions $L_{P_{k+1}}^{\ast}$ and $L_{R_{k+1}}^{\ast}$ as in \eqref{eq:incide}, respectively almost coincide
with local minima $(b_{k},L_{3,1}^{\ast}(b_{k}))$ and $(c_{k},L_{3,2}^{\ast}(c_{k}))$.
By this more precisely we mean that all differences 
\[
\vert b_{k}-d_{k}\vert, \quad \vert b_{k}-e_{k}\vert, \quad
\vert c_{k}-d_{k}\vert, \quad \vert c_{k}-e_{k}\vert
\]
as well as the corresponding differences of the $L^{\ast}$ evaluations
\begin{align*}
\vert L_{3,1}^{\ast}(b_{k})-L_{P_{k+1}}^{\ast}(d_{k})\vert, \qquad  
\vert L_{3,1}^{\ast}(b_{k})-L_{R_{k+1}}^{\ast}(e_{k})\vert,  \\
\vert L_{3,2}^{\ast}(c_{k})-L_{P_{k+1}}^{\ast}(d_{k})\vert, \qquad
\vert L_{3,2}^{\ast}(c_{k})-L_{R_{k+1}}^{\ast}(e_{k})\vert,
\end{align*}
at these points are bounded by a fixed constant for all $k$. Very similarly
it is obvious from the fact that $P_{k+1}(\zeta)$ and $R_{k+1}(\zeta)$ differ 
only by the factor $\zeta$ that $b_{k}$ and $c_{k}$ are asymptotically equal, 
by which we mean their ratio $b_{k}/c_{k}$ tends to one (in fact their difference 
$\vert b_{k}-c_{k}\vert$ is again bounded) as $k\to\infty$. 
Hence with the parametric formula \eqref{eq:umrechnen3} for the parameter $w_{3}^{(1)}=w_{3}^{(2)}=\rho$, with
\[
Q_{k}:=e^{b_{k}},  \qquad k\geq 1,
\]
(not to confuse with the polynomials $Q_{k,i}$) we calculate
\begin{equation} \label{eq:brauche}
\lim_{k\to\infty} \psi_{3,1}^{\ast}(Q_{k})= \lim_{k\to\infty} \psi_{3,2}^{\ast}(Q_{k})
=\frac{1-\sqrt{5}}{3(3+\sqrt{5})}.  
\end{equation}
Since $L_{3,1}^{\ast}$ and $L_{3,2}^{\ast}$ both decay with asymptotic slope $-1/3$ in intervals 
$I_{k}:=(q_{k},b_{k})$, that is
\[
L_{3,1}^{\ast}(b_{k})-L_{3,1}^{\ast}(q_{k})= (b_{k}-q_{k})(-\frac{1}{3}+\varepsilon),
\qquad L_{3,2}^{\ast}(b_{k})-L_{3,2}^{\ast}(q_{k})= (b_{k}-q_{k})(-\frac{1}{3}+\varepsilon),
\]
it follows from \eqref{eq:lsumme} that 
the sum $L_{3,3}^{\ast}+L_{3,4}^{\ast}$ asymptotically increases with constant
slope $2/3$ in $I_{k}$, that is
\[
L_{3,3}^{\ast}(b_{k})+L_{3,4}^{\ast}(b_{k})-L_{3,3}^{\ast}(q_{k})-L_{3,4}^{\ast}(q_{k})= 
(b_{k}-q_{k})(\frac{2}{3}+\varepsilon). 
\]
Consequently, if we can show that both $L_{3,3}^{\ast}$ and $L_{3,4}^{\ast}$
increase at most by $1/3$ in any large subinterval of $I_{k}$, that is for any $q_{k}\leq a<b\leq b_{k}$ we have
\begin{equation} \label{eq:exakter}
L_{3,3}^{\ast}(b)-L_{3,3}^{\ast}(a)\leq (b-a)(\frac{1}{3}+\varepsilon), \qquad 
L_{3,4}^{\ast}(b)-L_{3,4}^{\ast}(a)\leq (b-a)(\frac{1}{3}+\varepsilon),
\end{equation}
then both must have asymptotically constant increase by precisely $1/3$ in the entire interval $I_{k}$,
i.e. equality in \eqref{eq:exakter}.
We more precisely show the following claims. {\em Claim A:}
For any parameter $\tilde{X}\in(H(P_{k}),\infty)$, let 
\[
U_{k,\tilde{X}}=P_{k}\cdot E_{t}, \qquad V_{k,\tilde{X}}=P_{k}\cdot E_{t+1}
\]
with $t=t(k,\tilde{X})$ chosen as the largest integer such that 
$\max\{ H(U_{k,\tilde{X}}), H(V_{k,\tilde{X}})\}\leq \tilde{X}$. Then the functions $L_{3,.}^{\ast}(q)$ arising
from the succession (equals the pointwise minimum)
of the $L_{U_{k,\tilde{X}}}^{\ast}, L_{V_{k,\tilde{X}}}^{\ast}$ as 
$\tilde{X}$ runs through $(H(P_{k}),\infty)$
via \eqref{eq:incide} have asymptotically constant slope $1/3$ in $(b_{k-1},\infty)$.
By this more precisely we mean that for any $b_{k-1}\leq \tilde{X}<\tilde{Y}$ 
if $(a,L_{U_{k,\tilde{X}}}(a))$ or $(a,L_{V_{k,\tilde{X}}}(a))$ lies in the graph of 
$L_{U_{k,\tilde{X}}}$ or $L_{V_{k,\tilde{X}}}$ respectively
and similarly for $(b,L_{U_{k,\tilde{X}}}(b))$ or $(b,L_{V_{k,\tilde{X}}}(b))$, then we have
\[
L_{U_{k,\tilde{Y}}}^{\ast}(b)-L_{U_{k,\tilde{X}}}^{\ast}(a)
= (b-a)(\frac{1}{3}+\varepsilon), \qquad
L_{V_{k,\tilde{Y}}}^{\ast}(b)-L_{V_{k,\tilde{X}}}^{\ast}(a)= (b-a)(\frac{1}{3}+\varepsilon)
\]
{\em Claim B:}
Moreover if we restrict to $\tilde{X}\in(H(P_{k+1}),H(P_{k+2}))$, then the functions
$L_{U_{k,\tilde{X}}}^{\ast}$ and $L_{V_{k,\tilde{X}}}^{\ast}$ induce 
$L_{3,3}^{\ast}$ and $L_{3,4}^{\ast}$ on $I_{k}$ respectively
(remark:  as we will see later on they induce $L_{3,1}^{\ast}$ and $L_{3,2}^{\ast}$ in 
intervals $(b_{k-1},q_{k})$ if we let $\tilde{X}\in(H(P_{k}),H(P_{k+1}))$).

First recall that at the beginning $q_{k}$ of the interval $I_{k}$ the successive
minima are induced basically by $\{ P_{k}, TP_{k}, P_{k+1},TP_{k+1}\}$. 
Claim A follows basically directly from Lemma~\ref{technisch}, where $E_{t}$ and $E_{t+1}$
respectively play the role of $Q$ and $P_{k}$ the role of $P$. Note also that
$\delta$ from Lemma~\ref{technisch} tends to $0$ in our context in view of \eqref{eq:unibaer},
which also implies that the minima (in fact the entire functions) of consecutive functions of the form
$L_{U_{k,\tilde{X}}}^{\ast}$ or $L_{V_{k,\tilde{X}}}^{\ast}$ do not differ much.
Finally it should be pointed out that the condition $1/\log H(Q)=O(\delta)$ does
not cause problems since for any fixed $\delta>0$ and smaller heights $H(Q)$ only
minor changes of the function $L_{3,.}^{\ast}(q)$ can appear in intervals $(b_{k-1}, b_{k-1}+O(1))$, 
such that the global behavior of the function is not affected. 
For Claim B further observe that $\{ U_{k,\tilde{X}},V_{k,\tilde{X}}\}$ span the same space as
$\{ P_{k},TP_{k}\}$ for all $\tilde{X}\in(H(P_{k}),\infty)$, and we have already noticed
that polynomials in the space $\{ P_{k+1},TP_{k+1}\}$ induce the first two successive minima in $I_{k}$ and
$\{ P_{k},TP_{k},P_{k+1},TP_{k+1}\}$ are linearly independent. Hence
$L_{3,3}^{\ast}$ and $L_{3,4}^{\ast}$ are bounded above by $L_{U_{k,\tilde{X}}}^{\ast}$ and $L_{V_{k,\tilde{X}}}^{\ast}$
in $I_{k}$ respectively, and thus each increase at most by $1/3$. 
As noticed above we may conclude $L_{3,3}^{\ast}$ and $L_{3,4}^{\ast}$ must actually 
coincide with the functions induced by $L_{U_{k,\tilde{X}}}^{\ast}$ and $L_{V_{k,\tilde{X}}}^{\ast}$ respectively.

Thus together with \eqref{eq:brauche} we have proved
\begin{equation} \label{eq:raucher}
\lim_{k\to\infty} \psi_{3,1}^{\ast}(Q_{k})= \lim_{k\to\infty} \psi_{3,2}^{\ast}(Q_{k})
=\lim_{k\to\infty} -\psi_{3,3}^{\ast}(Q_{k})= \lim_{k\to\infty} -\psi_{3,4}^{\ast}(Q_{k})=
\frac{1-\sqrt{5}}{3(3+\sqrt{5})}.  
\end{equation}
We show next that in the interval $J_{k}:=(b_{k},q_{k+1})$ the functions $L_{3,1}^{\ast}, L_{3,2}^{\ast}$
have slope $-1/3$ whereas the functions $L_{3,3}^{\ast}, L_{3,4}^{\ast}$ have (asymptotic) slope $1/3$ until they all meet (asymptotically) at $q_{k+1}$. More precisely
\[
L_{3,1}^{\ast}(q_{k+1})-L_{3,1}^{\ast}(b_{k})=(q_{k+1}-b_{k})(-\frac{1}{3}+\varepsilon), \quad 
L_{3,2}^{\ast}(q_{k+1})-L_{3,2}^{\ast}(b_{k})=(q_{k+1}-b_{k})(-\frac{1}{3}+\varepsilon)
\]
such as
\[
L_{3,3}^{\ast}(q_{k+1})-L_{3,3}^{\ast}(b_{k})=(q_{k+1}-b_{k})(\frac{1}{3}+\varepsilon), \quad 
L_{3,4}^{\ast}(q_{k+1})-L_{3,4}^{\ast}(b_{k})=(q_{k+1}-b_{k})(\frac{1}{3}+\varepsilon)
\]
and
\[
L_{3,4}^{\ast}(q_{k+1})-L_{3,1}^{\ast}(q_{k+1})\leq \varepsilon q_{k+1}.
\]
Again by \eqref{eq:virtus} with index shift $k$ to $k+1$ 
we know that for arbitrarily small $\varepsilon$ and all large $k\geq k_{0}(\varepsilon)$ 
we indeed have
\begin{equation} \label{eq:virtue}
\vert L_{3,i}^{\ast}(q_{k+1})\vert\leq \varepsilon q_{k+1}, \qquad 1\leq i\leq 4.
\end{equation}
Since we have shown that $L_{3,1}^{\ast}$ and $L_{3,2}^{\ast}$ decay in $I_{k}$ with slope
$-1/3$ and \eqref{eq:raucher} holds it suffices to show that $J_{k}$ has asymptotically 
equal length as $I_{k}$, that is $\lim_{k\to\infty} \vert J_{k}\vert/\vert I_{k}\vert=1$,
to conclude that $L_{3,3}^{\ast}$ and $L_{3,4}^{\ast}$ must
decay with the minimum possible slope $-1/3$ in the entire interval $J_{k}$ and more precisely
\begin{equation}  \label{eq:show}
\lim_{k\to\infty} -\frac{L_{3,1}^{\ast}(b_{k})}{q_{k+1}-b_{k}}=
\lim_{k\to\infty} -\frac{L_{3,2}^{\ast}(b_{k})}{q_{k+1}-b_{k}}= 
\lim_{k\to\infty} \frac{L_{3,3}^{\ast}(b_{k})}{q_{k+1}-b_{k}}= 
\lim_{k\to\infty} \frac{L_{3,4}^{\ast}(b_{k})}{q_{k+1}-b_{k}}= 
 \frac{1}{3}.
\end{equation}
We show the claim that $I_{k}$ and $J_{k}$ have asymptotically equal length,
that is $\vert I_{k}\vert/\vert J_{k}\vert=1+o(1)$ as $k\to\infty$.
By construction this is equivalent to $b_{k}$ being asymptotically equal to $(q_{k}+q_{k+1})/2$,
that is $b_{k}=(q_{k}+q_{k+1})/2+o(q_{k})$.
Since $\lim_{k\to\infty} \log H(P_{k+1})/\log H(P_{k})=\nu$ and 
$L_{3,.}^{\ast}(q_{k})=o(q_{k})$ and $L_{3,.}^{\ast}(q_{k+1})=o(q_{k+1})$ as $k\to\infty$.
Further notice that $L_{3,1}^{\ast}, L_{3,2}^{\ast}$ decay in $(q_{k},b_{k})$ induced by $P_{k+1},TP_{k+1}$ and
thus by \eqref{eq:incide} we have $L_{3,j}^{\ast}(q_{i})=\log H(P_{i+1})-q_{i}/3+O(1)$ for
$1\leq j\leq 4$ and all $i\geq 1$. Putting all together leads to
\begin{equation} \label{eq:quotiente}
\lim_{k\to\infty} \frac{q_{k+1}}{q_{k}}=\nu.
\end{equation}
Thus the claimed asymptotic relation $b_{k}=(q_{k}+q_{k+1})/2+o(q_{k})$ 
is equivalent to $b_{k}=q_{k}\cdot(1+\nu)/2+o(q_{k})$.
We know that at $Q_{k}=e^{b_{k}}$ we have asymptotically 
\begin{equation} \label{eq:nebu}
\psi_{3,3}^{\ast}(Q_{k})=\frac{b_{k}-q_{k}}{3}+o(q_{k}), \qquad k\to\infty,
\end{equation}
since $L_{3,3}^{\ast}$ and $L_{3,3}^{\ast}$ are small at $q_{k}$ by \eqref{eq:virtus} and 
rise with slope $1/3$ in $I_{k}$. We remark that the asymptotic \eqref{eq:nebu} holds for $\psi_{3,4}^{\ast}(Q_{k})$ as well.
On the other hand \eqref{eq:raucher} provides an asymptotic formula for $\psi_{3,3}^{\ast}(Q_{k})$
and $\psi_{3,4}^{\ast}(Q_{k})$.
It follows directly from the definition of $L_{3,j}^{\ast}$ via $\psi_{3,j}^{\ast}$ in \eqref{eq:lpsi} that $\psi_{3,3}^{\ast}(Q_{k})$ is the slope 
from the origin to $(b_{k},L_{3,3}^{\ast}(b_{k}))$
of $L_{3,3}^{\ast}$ in the Schmidt-Summerer diagram (and similarly for $L_{3,4}^{\ast}$). 
Hence asymptotically 
\begin{equation} \label{eq:loes}
\psi_{3,3}^{\ast}(Q_{k})=\psi_{3,4}^{\ast}(Q_{k})=\frac{\sqrt{5}-1}{3(3+\sqrt{5})}b_{k}+o(b_{k}), \qquad k\to\infty.
\end{equation}
Again the asymptotic \eqref{eq:loes} holds for $\psi_{3,4}^{\ast}(Q_{k})$ as well.
Comparing the two expressions for $\psi_{3,3}^{\ast}(Q_{k})$ in \eqref{eq:nebu} and 
\eqref{eq:loes}, with a short computation indeed we verify $b_{k}=q_{k}\cdot(1+\nu)/2+o(q_{k})$,
so we have proved that $I_{k}$ and $J_{k}$ have asymptotically equal length.

Since consequently $L_{3,3}^{\ast}$ and $L_{3,4}^{\ast}$ both 
asymptotically decay with slope $-1/3$ in 
$J_{k}$, from \eqref{eq:lsumme} again we deduce that the sum $L_{3,1}^{\ast}+L_{3,2}^{\ast}$
must asymptotically increase by $2/3$ in $J_{k}$. Now recall in Claim A
we showed that $L_{U_{k,\tilde{X}}}^{\ast}, L_{V_{k,\tilde{X}}}^{\ast}$
asymptotically induce an increase with slope at most $1/3$ in the entire interval $(b_{k-1},\infty)$
if we let $\tilde{X}$ run through $(H(P_{k}),\infty)$.
Hence if we restrict to $\tilde{X}\in(H(P_{k}),H(P_{k+1}))$,
by a very similar argument as in Claim B, in the interval $(b_{k-1},q_{k})$ they induce  
$L_{3,1}^{\ast}$ and $L_{3,2}^{\ast}$ such that they both asymptotically increase precisely with this slope $1/3$. 
By index shift the analogue claim is clearly also true for $(b_{k},q_{k+1})=J_{k}$. 
Hence indeed both $L_{3,1}^{\ast}$ and $L_{3,2}^{\ast}$ must asymptotically increase with 
slope precisely $1/3$ in the entire interval $J_{k}$. 

Observe that the end of $J_{k}$ is 
the beginning of $I_{k+1}$, such that we have basically established a complete
description of all functions $L_{3,1}^{\ast},\ldots,L_{3,4}^{\ast}$ on $(0,\infty)$.
The characterizations of the graphs of $L_{3,i}^{\ast}(q)$ established above show that 
asymptotically at the values $q=b_{k}$
both the smallest local minima of $L_{3,1}^{\ast}(q), L_{3,2}^{\ast}(q)$ 
(in sense of minimal values of $\psi_{3,1}^{\ast}(Q), \psi_{3,2}^{\ast}(Q)$) and the largest local maxima of 
$L_{3,3}^{\ast}(q), L_{3,4}^{\ast}(q)$ (in sense of maximal values of $\psi_{3,3}^{\ast}(Q),\psi_{3,4}^{\ast}(Q)$)
are attained. Moreover both 
$\vert L_{3,1}^{\ast}(b_{k})-L_{3,2}^{\ast}(b_{k})\vert$ and 
$\vert L_{3,3}^{\ast}(b_{k})-L_{3,4}^{\ast}(b_{k})\vert$ are bounded uniformly in $k$,
in fact more generally $\vert L_{3,1}^{\ast}(q)-L_{3,2}^{\ast}(q)\vert$ and 
$\vert L_{3,3}^{\ast}(q)-L_{3,4}^{\ast}(q)\vert$ are uniformly bounded for $q\in(0,\infty)$.
Thus with \eqref{eq:raucher} we have
\[
\underline{\psi}_{3,1}^{\ast}= \underline{\psi}_{3,2}^{\ast}=\frac{1-\sqrt{5}}{3(3+\sqrt{5})}, \qquad
\overline{\psi}_{3,3}^{\ast}= \overline{\psi}_{3,4}^{\ast}=\frac{\sqrt{5}-1}{3(3+\sqrt{5})}.
\]
With \eqref{eq:jaja}, \eqref{eq:umrechnen} and \eqref{eq:umrechnen2} we derive 
\begin{equation} \label{eq:dazu}
w_{3}(\zeta)=w_{3,2}(\zeta)=\rho, \qquad \lambda_{3}(\zeta)=\lambda_{3,2}(\zeta)=\frac{1}{\sqrt{5}}.
\end{equation}
This contains in particular the claims in \eqref{eq:zweite}. 
\end{proof}

\begin{remark} \label{dasreh}
We can also determine the remaining constants 
$w_{3,i}, \lambda_{3,i},\widehat{w}_{3,i},\widehat{\lambda}_{3,i}$ for extremal numbers.
From \eqref{eq:debre} and \eqref{eq:dazu} we deduce
\begin{equation} \label{eq:hard}
\widehat{w}_{3,3}(\zeta)=\widehat{w}_{3,4}(\zeta)=\sqrt{5}, \qquad
\widehat{\lambda}_{3,3}(\zeta)=\widehat{\lambda}_{3,4}(\zeta)=\frac{1}{\rho}.
\end{equation}
Moreover the above characterizations of the functions $L_{3,i}^{\ast}$ imply
\[
\overline{\psi}_{3,1}^{\ast}= \overline{\psi}_{3,2}^{\ast}=
\underline{\psi}_{3,1}^{\ast}= \underline{\psi}_{3,2}^{\ast}=0.
\] 
With \eqref{eq:umrechnen2} and \eqref{eq:debre} this is equivalent to
\begin{align}  \label{eq:beweis}
w_{3,3}(\zeta)&=w_{3,4}(\zeta)=\widehat{w}_{3}(\zeta)=\widehat{w}_{3,2}(\zeta)=3, \\
\lambda_{3,3}(\zeta)&=\lambda_{3,4}(\zeta)=
\widehat{\lambda}_{3}(\zeta)=\widehat{\lambda}_{3,2}(\zeta)=\frac{1}{3}. \nonumber
\end{align}

\end{remark}

The description of the combined graph of the functions $L_{3,j}^{\ast}(q)$ 
and the information on the structure of the polynomials inducing them
from the proof of Theorem~\ref{ndrei} allows for estimating 
the approximation to an extremal number by algebraic numbers of degree precisely three.

\begin{proof}[Proof of Theorem~\ref{sternchen}]
It follows from the proof of Theorem~\ref{ndrei} and the description above that 
the first two successive minima functions
of the linear form problem related to $\psi_{3,1}^{\ast}, \psi_{3,2}^{\ast}$ are induced by
polynomial multiples of $P_{k}$ from Theorem~\ref{roythm}, and for each $k$ these multiples
span the same space as $\{ P_{k},TP_{k}\}$. Since $P_{k}$ have degree
two there is no irreducible polynomial of degree three which lies in the space spanned by
$\{ P_{k},TP_{k}\}$ for some $k$. Thus the optimal exponent in \eqref{eq:trio}
is not larger than $w_{3,3}(\zeta)$.
On the other hand it was shown in the proof of Theorem~\ref{ndrei} that $w_{3,3}(\zeta)=3$, 
see \eqref{eq:beweis}. Thus, combining these facts, we see
that indeed \eqref{eq:trio} has only finitely many solutions in $Q\in{\mathbb{Z}[T]}$ 
an irreducible polynomial of degree precisely three.
From \eqref{eq:trio} we infer \eqref{eq:genaudrei1} by a standard argument. 
Indeed if $R$ is the minimal
polynomial of some $\alpha$ then $\vert R(\zeta)\vert=\vert R(\zeta)-R(\alpha)\vert=
\vert \zeta-\alpha\vert\cdot R^{\prime}(z)$ for some $z$ between $\alpha$ and $\zeta$ by intermediate
theorem of differentiation. On the other hand $\vert R^{\prime}(z)\vert \ll H(R)$ for bounded $z$ is easy to see,
and the claim \eqref{eq:genaudrei1} follows from \eqref{eq:trio}.

Next we show \eqref{eq:trio2} and \eqref{eq:genaudrei2}.
By essentially the argument from the proof of \eqref{eq:trio} again $\widehat{w}_{3,3}(\zeta)$ is 
an upper bound for the exponent in \eqref{eq:trio2} for some large $X$. 
On the other hand we have noticed in \eqref{eq:hard} that
$\widehat{w}_{3,3}(\zeta)=\widehat{w}_{3,4}(\zeta)=\sqrt{5}$. 
Combination yields \eqref{eq:trio2} and we deduce \eqref{eq:genaudrei2} from it very similarly
as \eqref{eq:genaudrei1} from \eqref{eq:trio}.

For \eqref{eq:otherhand} recall that in the proof of Theorem~\ref{ndrei} we showed that
for any large $k$ there exists a linear polynomial $E_{l}$ such that with 
$X:=H(P_{k+1})$ and $Q_{k,1}:= P_{k}E_{l}$ we have
\begin{align}
H(Q_{k,1})&\asymp H(P_{k+1})=X,  \nonumber \\
\vert Q_{k,1}(\zeta)\vert &\leq X^{-3+\epsilon}, \qquad \vert P_{k+1}(\zeta)\vert \leq X^{-3+\epsilon}. \label{eq:number}
\end{align}
Since $Q_{k,1}$ is not irreducible by construction and $P_{k+1}$ has degree only $2$,
we consider the polynomials $S_{k,j}(T):= Q_{k,1}(T)+jT\cdot P_{k+1}(T)$ for $j\in\{1,2\}$.
We show that at least one of these two polynomials has the desired properties (in fact we
need the distinction only for the right hand side of \eqref{eq:otherhand}, the left follows for both $j=1$ and $j=2$).
The polynomials $S_{k,j}(T)$ obviously have degree three and height $H(S_{k,j})\ll X$.
Moreover with \eqref{eq:number} we infer
\begin{equation} \label{eq:comeon}
\vert S_{k,j}(\zeta)\vert= \vert Q_{k,1}(\zeta)+j\zeta P_{k+1}(\zeta)\vert
\leq \vert Q_{k,1}(\zeta)\vert+j\vert \zeta\vert\cdot \vert P_{k+1}(\zeta)\vert
\ll_{\zeta} X^{-3+\epsilon}, \quad 1\leq j\leq 2.
\end{equation}
Next we check that $S_{k,j}$ are irreducible for large $k$ and $1\leq j\leq 2$. 
Consider $j$ fixed and suppose $S_{k,j}$ is reducible.
Then we may write $S_{k,j}(T)=M(T)N(T)$ for $M,N\in\mathbb{Z}[T]$ each of degree one or two. Then
$\vert S_{k,j}(\zeta)\vert=\vert M(\zeta)\vert\cdot \vert N(\zeta)\vert$ and it follows
from \eqref{eq:multipli} and \eqref{eq:comeon} that at least one of the inequalities
\[
\vert M(\zeta)\vert \leq H(M)^{-3+2\epsilon}, \qquad \vert N(\zeta)\vert \leq H(N)^{-3+2\epsilon}
\]
must be satisfied, see also the remark subsequent to \eqref{eq:multipli}. 
Without loss of generality say this holds for $M$.
However, since $w_{2,2}(\zeta)\leq \tau<3$, see \eqref{eq:rhotau},
and $M$ has degree at most two, it follows from Theorem~\ref{roythm} that the inequality
can only be satisfied if $M$ is some $P_{l}$ from Theorem~\ref{roythm}.
However, by construction of $S_{k,j}$ we clearly cannot have $P_{k}\vert S_{k,j}$ or $P_{k+1}\vert S_{k,j}$.
Thus $M=P_{l}$ for some $l\leq k-1$. Theorem~\ref{roythm} further implies
\[
H(M)\ll H(P_{k-1})\ll H(P_{k+1})\cdot \frac{H(P_{k-1})}{H(P_{k+1})}
= X\cdot \frac{H(P_{k-1})}{H(P_{k+1})}\ll X^{1/\nu^{2}}=
X^{1/\tau}
\]
and it follows further that
\begin{equation} \label{eq:mgl}
\vert M(\zeta)\vert\asymp H(M)^{-\rho}\gg X^{-\rho/\tau}=X^{-\nu}.
\end{equation}
Since $M=P_{l}$ has degree two and $S_{k,j}$ degree three,
the polynomial $N$ must have degree one such that by $\lambda_{1}(\zeta)=1$ from 
\eqref{eq:werte} we have
\begin{equation} \label{eq:ngl}
\vert N(\zeta)\vert \gg H(N)^{-1-\epsilon}\gg X^{-1-\epsilon}.
\end{equation}
Combination of \eqref{eq:mgl} and \eqref{eq:ngl} yields
\[
\vert S_{k,j}(\zeta)\vert = \vert M(\zeta)\vert\cdot \vert N(\zeta)\vert
\gg X^{-\nu-1-\epsilon}=X^{-\tau-\epsilon}.
\]
Again from $\tau<3$ we obtain a contradiction to \eqref{eq:comeon} for small $\epsilon$. 
Hence the assumption was wrong and indeed $S_{k,j}$ must be irreducible for $j\in\{ 1,2\}$, 
and in view of \eqref{eq:comeon} we have finished the proof of the left hand side of \eqref{eq:otherhand}.

For the right hand side of \eqref{eq:otherhand} suppose we have already shown
that for all large $k$ and some $j=j(k)\in\{1,2\}$ we have
\begin{equation} \label{eq:lugner}
\vert S_{k,j}^{\prime}(\zeta)\vert \gg X^{1-\epsilon}.
\end{equation}
Then the claim follows together with \eqref{eq:comeon}
from the left hand side for $\alpha$ some root of the corresponding $S_{k,j}$
by a similar standard argument as in the deduction of \eqref{eq:genaudrei1}
from \eqref{eq:trio}. Indeed it is well-known that any polynomial $U\in\mathbb{Z}[T]$ has a root $\beta$ that satisfies $\vert \beta-\zeta\vert\ll \vert U(\zeta)\vert/H(U)$, see for
example~\cite{royyy}. The claim follows with $U=S_{k,j}$.
It remains to be checked that \eqref{eq:lugner} holds, for which we use \eqref{eq:strich}. 
First note that the derivative of $S_{k,j}$ can be written
\begin{equation} \label{eq:rrr}
\vert S_{k,j}^{\prime}(\zeta)\vert= \vert Q_{k,1}^{\prime}(\zeta)+jP_{k+1}(\zeta)+j\zeta P_{k+1}^{\prime}(\zeta)\vert,
\qquad 1\leq j\leq 2.
\end{equation}
Obviously the term $jP_{k+1}(\zeta)$ in the sum is negligible since it is very small.
Hence \eqref{eq:rrr} can be small only if $Q_{k,1}^{\prime}(\zeta)$ is of the same order (and reverse sign)
as $j\zeta P_{k+1}^{\prime}(\zeta)$. On the other hand \eqref{eq:strich} implies for all large $k$ the estimate
\[
\vert j\zeta P_{k+1}^{\prime}(\zeta)\vert \geq j\vert\zeta\vert H(P_{k+1})^{1-\epsilon}\gg_{\zeta} X^{1-\epsilon},
\qquad j\in\{1,2\},
\]
and very similarly the difference between the right hand sides in \eqref{eq:rrr}
for $j=2$ and $j=1$ is at least of order $X^{1-\epsilon}$ as well. 
It follows that \eqref{eq:lugner} can be violated for at most one index $j\in\{1,2\}$, 
and for the other index \eqref{eq:lugner} must be satisfied. 
This finishes the proof of \eqref{eq:otherhand}.
\end{proof}

We finish by giving a heuristic argument why the exponents in \eqref{eq:trio2} and \eqref{eq:genaudrei2}
should be optimal as well. For any $\tilde{X}$ we can again consider linear combinations 
$S_{k,j}(T)=jTP_{k+1}(T)+P_{k}(T)E_{t}(T)$ for $k=k(\tilde{X})$ largest 
possible such that $H(P_{k+1})\leq \tilde{X}$ and some $E_{t}$ of degree one from the proof 
of Theorem~\ref{ndrei} such that \eqref{eq:trio2} is satisfied for $Q(T)=Q_{k,1}(T)=P_{k}(T)E_{t}(T)$. 
Given the irreducibility of $S_{k,j}$ for all large $k$
and $j$ rather small, we can again basically proceed as in the proof of \eqref{eq:otherhand}.
However, the method from the proof of \eqref{eq:otherhand} to guarantee the
irreducibility of some of the arising $S_{k,j}(T)$ does not work here.

\vspace{1cm}

The author warmly thanks the anonymous referee for the careful reading 
and for pointing out inaccuracies


\begin{thebibliography}{99}

\bibitem{adabu} B. Adamczewski and Y. Bugeaud. Mesures de transcendance et aspects quantitatifs
de la m\'ethode de Thue-Siegel-Roth. {\em Proc. London Math. Soc.} 101 (2010), 1--31.

\bibitem{bug} Y. Bugeaud. On simultaneous rational approximation to a real number
and its integral powers. {\em Ann. Inst. Fourier (Grenoble)} 60 (2010), 2165--2182.

\bibitem{buglau} Y. Bugeaud and M. Laurent. Exponents of Diophantine approximation
and Sturmian continued fractions. {\em Ann. Inst. Fourier (Grenoble)} 55 (2005), no. 3, 773--804.

\bibitem{buschl} Y. Bugeaud and J. Schleischitz. On uniform approximation to real numbers.
{\em to appear in Acta Arith., arXiv: 1512.00780}. 

\bibitem{davsh67} H. Davenport and W. M. Schmidt. 
Approximation to real numbers by quadratic irrationals.
{\em Acta Arith.} 13 (1967), 169--176.

\bibitem{davsh} H. Davenport and W. M. Schmidt. 
Approximation to real numbers by algebraic
integers. {\em Acta Arith.} 15 (1969), 393--416.

\bibitem{german} O. German. On Diophantine exponents and Khintchine's transference principle. 
{\em Mosc. J. Comb. Number Theory} 2 (2012), 22--51.

\bibitem{khintchine} A.Y. Khintchine, \"Uber eine Klasse linearer diophantischer Approximationen,
{\em Rend. Circ. Mat. Palermo} {\bf 50} (1926), 706--714.

\bibitem{roy}  D. Roy, Diophantine approximation in small degree,  
{\em Number theory: Proceedings from the 7th Conference of
the Canadian Number Theory Association}, CRM Proc. Lecture Notes, {\bf 36} (2004), 269--285.
Amer. Math. Soc., Providence, RI.


\bibitem{royyy} D. Roy, Approximation to real numbers by cubic algebraic integers I, 
{\em Proc. London Math. Soc.} {\bf 88} (2004), 42--62.

\bibitem{roydam} D. Roy, Approximation to real numbers by cubic algebraic integers II,
{\em Ann. Math. (2)} 158 (2003), no. 3, 1081--1087.  

\bibitem{schlei} J. Schleischitz. On the spectrum of Diophantine approximation constants.
{\em Mathematika} 62 (2016), 79--100.

\bibitem{j2} J. Schleischitz. Two estimates concerning classical Diophantine approximation constants,
{\em Publ. Math. Debrecen} 84/3-4 (2014), 415--437. 

 \bibitem{j1} J. Schleischitz. Diophantine approximation and special Liouville numbers,
{\em Comm. Math.} 21 (2013), 39--76. 

\bibitem{ss} W.M. Schmidt, L. Summerer. Parametric geometry of numbers and applications, 
{\em Acta Arith.} 140 (2009), no. 1,  67--91.  

\bibitem{ssch} W.M. Schmidt, L. Summerer. Diophantine approximation and parametric geometry of numbers. 
{\em Monatsh. Math.} 169 (2013), 51--104. 

\bibitem{sums} W.M. Schmidt, L. Summerer. 
Simultaneous approximation to three numbers.
{\em Mosc. J. Comb. Number Theory} 3 (2013), 84--107.

\bibitem{wirsing} E. Wirsing. Approximation mit algebraischen Zahlen beschr\"ankten Grades.
{\em J. Reine Angew. Math.}  {206} (1961), 67--77.

\end{thebibliography}
\end{document}